\newcommand{\floor}[1]{\lfloor #1 \rfloor}
\newcommand{\R}{\mathbb{R}}
\newcommand{\N}{\mathbb{N}}
\newcommand{\Z}{\mathbb{Z}}
\newcommand{\PP}{\mathcal{P}}
\newcommand{\D}{\mathcal{D}}
\DeclareMathOperator{\s}{\mathcal{S}} 
\DeclareMathOperator{\LL}{\mathcal{L}}
\DeclareMathOperator{\muW}{\mu_{\scalebox{0.7}{W}}}
\DeclareMathOperator{\dist}{dist}
\DeclareMathOperator{\mudelta}{\mu_1}
\def\bsq#1{
\lq{#1}\rq}
\newcommand{\pa}[1]{\mathcal{P} \hspace{-2pt}\mathcal{A}\hspace{-1pt}\left({#1}\right)}
\newcommand{\rev}[1]{{\color{black}{#1}}}
\numberwithin{equation}{section}
\newtheorem{thm}{Theorem}
\numberwithin{thm}{section} 
\newtheorem{conj}[thm]{Conjecture}
\newtheorem{cor}[thm]{Corollary}
\newtheorem{prop}[thm]{Proposition}
\newtheorem{lemma}[thm]{Lemma}
\newtheorem{example}[thm]{Example}
\newtheorem{definition}[thm]{Definition}
\newtheorem{remark}[thm]{Remark}
\begin{document}

\title{Subdivision Schemes in Metric Spaces}

\author[1]{Nira Dyn\thanks{Email: niradyn@tauex.tau.ac.il}}
\author[1]{Nir Sharon\thanks{Corresponding author: nsharon@tauex.tau.ac.il}}

\affil[1]{School of Mathematical Sciences, Tel Aviv University, Tel Aviv, Israel}

\date{\vspace{-5ex}}

\maketitle
\begin{abstract}
We develop a unified framework for nonlinear subdivision schemes on complete metric spaces (CMS). We begin with CMS preliminaries and formalize refinement in CMS, retaining key structural properties, such as locality. We prove a convergence theorem under contractivity and demonstrate its applicability. To address schemes where contractivity is unknown, we introduce two notions of proximity. Our proximity methods relate a nonlinear scheme to another nonlinear scheme with known contractivity, rather than to a linear scheme, as is common in the literature. Specifically, the first type of proximity compares the two schemes after a single refinement step and, as in the classical theory, yields convergence from sufficiently dense initial data. The proximity of the second type monitors alignment across all refinement levels and provides strong convergence without density assumptions. We formulate and prove the corresponding theorems and illustrate them with various examples, including schemes over metric spaces of compact sets in $\R^n$, the Wasserstein space, and a geometric Hermite metric space. \rev{We also include a non-stationary extension, giving a direct convergence criterion for level-dependent refinement operators in CMS in terms of summability of products of refinement factors.} These results extend subdivision theory beyond Euclidean and manifold-valued data to metric spaces.
\end{abstract}

\vspace{0.5em}
\noindent\textbf{Keywords:} subdivision schemes, complete metric spaces, convergence, proximity, smoothness

\vspace{1em}
\noindent\textbf{Mathematics Subject Classification (2020):} 65D17, 41A25, 41A63, 53C20

\vspace{1em}
\noindent\textbf{Communicated by Pencho Petrushev}

\section{Introduction} 

Subdivision schemes are iterative algorithms that generate smooth curves or surfaces from discrete sets of control points in a Euclidean space~\cite{cavaretta1991stationary, dyn1992subdivision_book}. Originating in computer-aided geometric design (CAGD) and also investigated in Computer Graphics, these schemes refine an initial coarse shape into progressively smoother and more detailed forms. New points are computed at each iteration using local rules that blend neighboring values, approximating a continuous function or a geometric object. Subdivision schemes are widely applied in fields such as Animation, Computer Graphics, CAGD, and applied approximation ~\cite{liu2022review, dyn1992subdivision_book, conti2016approximation, cohen2002nonlinear}. The strength of subdivision schemes lies in the ability to balance computational simplicity with the production of visually and mathematically smooth results. Yet, the analysis of the limits of these schemes is considered challenging, and in particular, addressing the question of determining whether a given set of refinement rules leads to a converging process and what is the smoothness of the limits of such a process~\cite{dyn2002analysis}.

Recent developments in subdivision schemes have extended their applicability to nonlinear domains, such as manifolds, Lie groups, and other geometric spaces where traditional linear operations are not well-defined~\cite{wallner2005convergence, sharon2013approximation, dyn2017global, crouch1999casteljau, dyn2017manifold, noakes1998nonlinear, duchamp2018smoothing}. In these contexts, standard linear averaging used in classical schemes must be replaced with operations that respect the underlying geometry, such as geodesic interpolation or intrinsic means. This shift has enabled subdivision techniques to be applied to data arising in fields like computer vision, medical imaging, and robotics, where information often resides on curved spaces rather than in flat Euclidean settings~\cite{yassine2008subdivision, chen2021numerical, liu2022review}. These nonlinear subdivision schemes preserve critical structural properties of the data, such as distances, orientations, and constraints, making them essential tools for the accurate and efficient approximation of complex geometric and functional data~\cite{itai2013subdivision}.

Approximation techniques are increasingly being developed to address more general and complex nonlinear data settings beyond manifold-valued data. For example, some work has extended approximation frameworks to handle data in Wasserstein spaces, that is, metric spaces of probability measures~\cite{banerjee2025efficient}. Another example is set-valued data, where each data point is a nonempty, compact subset of a fixed Euclidean space rather than a scalar or a vector, which presents additional challenges due to the lack of a linear structure, prompting the design of specialized averaging and interpolation methods~\cite{dyn2014approximation, levin2025set}. These advancements reflect a broader trend towards designing subdivision and approximation schemes that can adapt to the intrinsic geometry and structure of diverse, nonlinear data types arising in modern applications across science and engineering. This paper aims to fill some of the theoretical gaps in the study of subdivision schemes for data in metric spaces, which includes all the above examples.

Our analysis is set within the framework of subdivision schemes for data in a complete metric space (CMS) with an average. Our primary focus is on the question of convergence. The main contributions of this work are as follows. We begin by rigorously establishing the definitions of averaging and refinement sequences in a CMS with an average, laying the groundwork for a coherent extension of classical subdivision theory. We then investigate the fundamental properties of these schemes and formalize an appropriate notion of convergence in this general setting. The core of our analysis is divided into two main parts. In the first part, we prove a generalized convergence theorem that extends existing results from the manifold-valued case to the broader context of data from a CMS with an average. In the second part, we turn to proximity analysis, examining how to infer convergence and smoothness of one subdivision scheme from the properties of a related subdivision scheme.

Proximity analysis in subdivision schemes originated from the challenge of studying nonstationary schemes (where refinement rules can vary from level to level), by their ``proximity'' to stationary schemes (where the same refinement rules are applied at each refinement level)~\cite{dyn2003exponentials}. Note that in this ``proximity'' both schemes are linear. The concept was later extended to manifold-valued settings by comparing a linear scheme operating on points in a Euclidean space, such as a B-spline scheme, with its nonlinear counterpart operating on manifold data, showing how the nonlinear scheme inherits key convergence and smoothness properties from the linear scheme ~\cite{wallner2005convergence, duchamp2018smoothing}.

In this paper, we begin by introducing the {\bf first type of proximity}, which generalizes the above proximity idea by allowing both schemes to be nonlinear. In this context, we also treat the notion of a \rev{contractivity level} and use it with other quantifiers to provide sharper conditions on the input data for which convergence can be deduced. We prove a convergence theorem induced by proximity and provide an example of its application.

\rev{We introduce a new notion of proximity, which we call {\bf proximity of the second type}. This notion appears here for the first time. Recall that} the first type of proximity compares two subdivision schemes after a single refinement step and uses this comparison to infer their similar limiting behavior. In contrast, the second notion of proximity examines how one scheme consistently follows another scheme at all refinement levels, with the difference between them shrinking at each step. Unlike the first type, the form of the second type of proximity depends on the input data and is not symmetric--only one scheme is required to approximate the other. Crucially, it guarantees a stronger form of convergence, valid from any initial data. 

Additionally, the second type has a clear notion of order that in Euclidean spaces directly reflects the smoothness of the limit: zero order ensures convergence, while first order yields $C^1$ smoothness, whenever the scheme to which the analyzed scheme is in proximity with, has these properties. In the absence of a natural notion of smoothness in \rev{CMSs}, our smoothness analysis is confined to Euclidean spaces. We establish both convergence and smoothness results and demonstrate their application by examples.

\rev{Finally, we return to the classical motivation for proximity analysis, namely non-stationary subdivision schemes. In Section~\ref{sec:nonstationary} we formulate a CMS-valued non-stationary setting and prove a direct convergence criterion for such schemes. The criterion is expressed in terms of level-dependent refinement factors and block displacement estimates. Under uniform bounds on the refinement levels, the block displacement-safe constants, and the refinement factors, convergence follows from the summability of the corresponding products of refinement factors.}

The paper is organized as follows. Section~\ref{sec:perlimin} presents a brief background survey that provides the necessary context for our work. The following Section~\ref{sec:CMS_perlimn} completes this survey with the definition of average in CMS, accompanied by examples. In Section~\ref{sec:main_convergence}, we introduce the foundational definitions and results related to refinement in complete metric spaces with an average. Section~\ref{sec:proximity1} is devoted to the first type of proximity analysis, while Section~\ref{sec:proximity2} introduces and explores the second type of proximity. In Sections~\ref{sec:proximity1} and ~\ref{sec:proximity2}, examples in various metric spaces are provided. \rev{Section~\ref{sec:nonstationary} is devoted to the definitions and convergence results concerning non-stationary schemes.}

A concise overview of the paper is provided as a roadmap in Figure~\ref{fig:org-map-new}.

\begin{figure}[h!]
\centering
\begin{tikzpicture}[
  >=Latex,
  every node/.style={font=\small},
  box/.style={draw, rounded corners=2pt, align=left, inner sep=2.5pt},
  wide/.style={box, text width=.98\textwidth},
  col/.style={box, text width=.47\textwidth},
  line/.style={-{Latex}}
]

\node[wide] (s2) {\textbf{Sec.~2 --- A short review on linear subdivision in $\mathbb{R}^n$.}};

\node[wide, below=4mm of s2] (s3) {
\textbf{Sec.~3 --- CMS preliminaries:}\\
binary averages; intrinsic vs.\ non-intrinsic; brief examples.
};

\node[wide, below=4mm of s3] (s4) {
\textbf{Sec.~4 --- Refinement \& convergence in CMS\rev{s}:}\\
piecewise average interpolant; refinement and parameters; limit curve;
main convergence theorem; applications to elementary schemes; approximation theorem.
};

\node[col, anchor=north west] (s5) at ([yshift=-8mm]s4.south west) {
\textbf{Sec.~5 --- First type proximity:}\\
definition; weak convergence result; application via example.
};

\node[col, anchor=north east] (s6) at ([yshift=-8mm]s4.south east) {
\textbf{Sec.~6 --- Second type proximity:}\\
definition; strong convergence result; $C^1$ smoothness in $\mathbb{R}^n$
for order $m{=}1$; $C^m$ conjecture on order $m$; $C^1$ example.
};

\node[inner sep=0pt, fit=(s5)(s6)] (s56) {};
\node[wide, below=6mm of s56] (s7) {
\textbf{\rev{Sec.~7 --- Non-stationary subdivision schemes:}}\\
\rev{level-dependent refinement operators; refinement factors;
direct convergence criterion under summability of refinement-factor products.}
};

\draw[line] (s2) -- (s3);
\draw[line] (s3) -- (s4);
\draw[line] (s4.south) -- ++(0,-2mm) -| (s5.north);
\draw[line] (s4.south) -- ++(0,-2mm) -| (s6.north);
\draw[line] (s5.south) -- ++(0,-2mm) -| ([xshift=-18mm]s7.north);
\draw[line] (s6.south) -- ++(0,-2mm) -| ([xshift=18mm]s7.north);

\end{tikzpicture}
\caption{The paper's roadmap.}
\label{fig:org-map-new}
\end{figure}
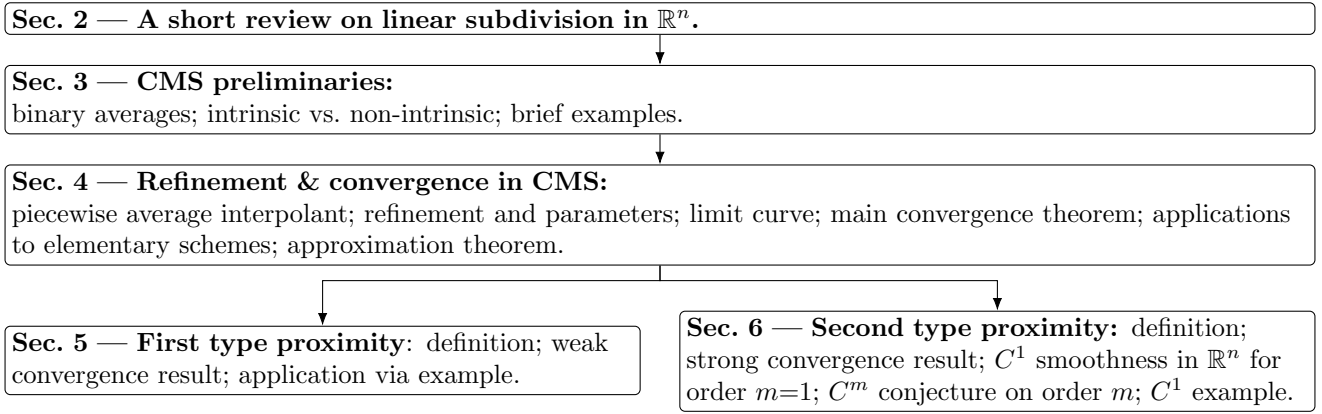

\section{A quick recap: binary linear subdivision schemes in Euclidean spaces} \label{sec:perlimin}

Given a sequence $V^0=\{v^0_i\}_{i\in\Z}$ of  \rev{bounded} vectors in a Euclidean space\rev{, such that $\sup_{i\in\Z} \norm{v^0_i} <\infty$}. A binary, linear subdivision scheme generates curves by repeatedly applying a refinement operator $S_{\rev{\textbf {a}}}$ of the form 
\begin{equation}\label{eq:refinementoper}
     V^{k+1} = S_{\rev{\textbf {a}}}V^k,\quad v^{k+1}_i=\sum_{j\in \Z}a_{i-2j}v^k_j,\quad i\in\Z, \quad k\in\Z_+ .
\end{equation}
Here $\rev{\textbf {a}}=\{a_i\}_{i\in\Z}$ is a sequence of real numbers of {\bf finite support}, which is called the mask of the refinement operator $S_{\rev{\textbf {a}}}$ and $V^k$ is the sequence of vectors generated after $k$ refinements of $V^0$ by $S_{\rev{\textbf {a}}}$, namely $V^k=S_{\rev{\textbf {a}}}^kV^0 . $

Note that in \eqref{eq:refinementoper}, there is one refinement rule for the vectors with even indices and another rule for the vectors with odd indices. In addition, \eqref{eq:refinementoper} implies that every component of the vectors of  $V^k$ \rev{is} refined by the same refinement, independently of the other components. 

The sequence $V^k=\{v^k_i\}_{i\in\Z}$ is associated with a sequence of parameters $T^k=\{t^k_i\}_{i\in\Z}$ for all $k\in \Z_+$. For binary linear subdivision schemes generating curves, it is customary to use $T^0 = \Z$, and $T^k = (\frac{1}{2})^k \Z = \frac{1}{2}T^{k-1}$, known as the primal parameterization, or $T^0=\Z$ and $T^k = (\frac{1}{2})^k \Z + \sum_{\ell=2}^{k+1}(\frac{1}{2})^{\ell} = \frac{1}{2}\left(T^{k-1}+\frac{1}{2}\right)$, known as the dual parameterization.

To define { \bf convergence} of a subdivision scheme, we introduce the sequence $\{ {\mathcal L}_k(t)\}_{k\in\Z_+}$, where ${\mathcal L}_k(t)$ is a
vector-valued piecewise linear interpolants of the sequence of points $\{(t^k_i,v^k_i)\}_{i\in\Z} $. Thus 
$${\mathcal L}_k (t)=(1-w)v_i^k+w{v^k_{i+1}},\quad w=\frac{t-t^k_i}{t^k_{i+1}-t_i^k},\quad t\in[t_i^k,t_{i+1}^k],\quad i\in \Z. $$ 
The limit, in the uniform norm, of the sequence $\{ {\mathcal L}_k(t)\}_{k\in\Z_+}$, if it exists, is defined as
the limit curve generated by the subdivision scheme.

\rev{A linear subdivision scheme that converges from any initial sequence $V^0$ of bounded vectors is termed ``convergent''.} Determining if a subdivision scheme is convergent, given by the mask of its refinement operator, is a central issue in the theory of linear subdivision schemes and is solved completely for schemes generating curves, see, e.g.~\cite{dyn1992subdivision_book}. This issue is also central to the present paper, where we study subdivision schemes in complete metric spaces with an average.

As an example, we consider the canonical family of linear subdivision schemes generating spline curves.
The subdivision scheme based on the refinement operator with the mask 
$\rev{a_i=}(\frac{1}{2})^n\binom{n+1}{i}, i=0,\ldots n+1$, is convergent for any $n\in\N$ and generates limit curves which are splines of degree $n$ with integer knots having continuous derivatives up to order $n-1$. 
The scheme corresponding to $n=1$ \rev{ has the mask $(\frac{1}{2},1,\frac{1}{2})$ and} the two refinement rules
\begin{equation}\label{eqn:piecewiselin}
    v^{k+1}_{2i} = v^k_i,\quad v^{k+1}_{2i+1}= \frac{1}{2}(v^k_i + v^k_{i+1}).
    \end{equation}
 This scheme is an elementary scheme with respect to the arithmetic average (see \eqref{eqn:elementary_scheme}), and its corresponding parameterization is the primal parameterization, $t_j^k=j\cdot 2^{-k}$. It is easy to see its convergence by observing that all the corresponding ${\mathcal L}_k,\  k\in \rev{{\mathbb Z}_+} $
are equal. This is the case since the points at refinement level $k+1$ lie on ${\mathcal L}_k$, for all $k\in {\mathbb Z}_+$.
 
 The scheme corresponding to $n=2$ has  \rev{the mask $(\frac{1}{4}, \frac{3}{4}, \frac{3}{4}, \frac{1}{4}) $ and the two} refinement rules
\begin{equation}\label{eqn:cc}
    v^{k+1}_{2i} =\frac{3}{4}v^k_i + \frac{1}{4}v^k_{i+1},\quad v^{k+1}_{2i+1}= \frac{1}{4}v^k_i + \frac{3}{4}v^k_{i+1}.
    \end{equation}
The corresponding parameterization is the dual parameterization. This is the well-known Chaikin’s corner-cutting scheme~\cite{chaikin1974algorithm}. This scheme can also be computed by applying, in each refinement level, first the elementary scheme based on the arithmetic average, and then a step of inserting the arithmetic average of any pair of consecutive points between the two points~\cite{lane1980theoretical}.

\rev{A scheme as in equation \eqref{eq:refinementoper}, where the same refinement operator is applied in all refinement levels, is termed \textbf {stationary scheme}. A \textbf {non-stationary} subdivision scheme is defined by a sequence of refinement operators $ \{S_{\textbf {a}^{[j]}}\}_{j\in\Z_+},$ such that at refinement level $k\in\N$,
$ V^{k+1}= S_{\textbf {a}^{[k]}}V^k.$ 

Non-stationary subdivision schemes are fundamental in geometric modeling due to their enhanced flexibility and adaptability, allowing them to generate a broader class of functions. For example, their ability to reproduce exponential polynomials makes them well-suited for accurately modeling shapes such as conics and spirals, capabilities that lie beyond those of stationary schemes, see e.g.,~\cite{dyn2003exponentials}. This classical non-stationary setting motivates the CMS-valued extension
developed in Section~\ref{sec:nonstationary}.
}

\section{CMS Preliminaries: Averages, Intrinsic Averages and Examples} \label{sec:CMS_perlimn}

This section lays out the basic notions we need in complete metric spaces prior to studying refinement. We introduce the idea of an ``average'' between two elements of the space and discuss the relationship between the average and the metric. We illustrate the ideas through three brief examples: compact sets with the Hausdorff metric, probability measures in the Wasserstein space, and a geometric Hermite setting in $\mathbb{R}^n\times S^{n-1}$. These preliminaries set the stage for the refinement and convergence theory of subdivision schemes developed in the next section.

\subsection{Averages in metric spaces}

We consider a metric space  $X=(\Omega,d)$, where $\Omega$ is the collection of elements of $X$ endowed with a metric $d$. \rev{
Before introducing averages in metric spaces, we clarify their role in the present work. The primary purpose of the average is to provide an analog of piecewise linear interpolation for discrete sequences of points in a complete metric space. This construction serves as the foundation for our notion of parametric convergence, which we develop in Section~\ref{sec:main_convergence} using piecewise average interpolants. In addition, the same averaging operation is used in several examples to define subdivision rules; see, for example, the elementary refinement rule~\eqref{eqn:elementary_scheme} and the corner-cutting type scheme~\eqref{eqn:additional_round}.
}

The average defined next extends the notion of a binary average of numbers, such as the arithmetic average $ A_\omega(x,y)=(1-\omega)x + \omega y$, or the harmonic mean $ A_\omega(x,y)= \left( (1-\omega)\frac{1}{x} + \omega \frac{1}{y} \right)^{-1}$, for $x,y\in {\mathbb R}$, to elements of $X$.

\begin{definition} \label{def:average}
    Let $X=(\Omega,d)$ be a metric space. A map $A_\omega \colon \Omega \times \Omega \to \Omega$ is called a binary average (or, for short, average) in $X$ if it satisfies:
    \begin{enumerate}
    \item \label{def:item:continuous} $A_\omega$ is defined for any $\omega \in [0,1]$ and is continuous with respect to $\omega$ there.
    \item \label{def:item:endpoints} (End points interpolation) For any $x,y \in \Omega$, $A_0(x,y) = x$ and $A_1(x,y) = y$.
    \item \label{def:item:identity} (Identity on the diagonal) For any $\omega \in [0,1]$ and $x \in \Omega$, $A_\omega(x,x) = x$. 
   \item (Symmetry) For any $\omega \in [0,1]$ and $x,y \in \Omega$, $A_\omega(x,y) = A_{1-\omega}(y,x) $
   \item(Boundedness) Under the conditions of the previous item $$\max \{ d(x,A_\omega(x,y)),d(y,A_\omega(x,y)) \} \le d(x,y).$$ 
    \end{enumerate}
\end{definition}
\begin{example}
\rev{
A useful source of binary averages is given by weighted
Fr\'echet means. Let \(X=(\Omega,d)\) be a metric space and, for \(x,y\in\Omega\) and \(\omega\in[0,1]\), consider the functional
\[
    F_{x,y,\omega}(a)
    =
    (1-\omega)d^2(x,a)+\omega d^2(a,y),
    \qquad a\in\Omega .
\]
When the minimization problem
\begin{equation} \label{eqn:frechet_def}
     A_\omega(x,y)=\arg\min_{a\in\Omega} F_{x,y,\omega}(a)
\end{equation}

has a unique solution for every \(x,y\in\Omega\) and every
\(\omega\in[0,1]\), and when, for every fixed \(x,y\in\Omega\), the map
\[
    \omega\mapsto A_\omega(x,y)
\]
is continuous on \([0,1]\), then \eqref{eqn:frechet_def} defines a binary average in the sense of Definition~\ref{def:average}.

Indeed, the endpoint conditions follow from the cases \(\omega=0\) and \(\omega=1\), and the identity on the diagonal follows by taking \(x=y\). The symmetry property follows from the identity
\[
    F_{x,y,\omega}(a)=F_{y,x,1-\omega}(a).
\]
Finally, for \(0<\omega<1\), comparison of the minimizer with \(a=x\) gives
\[
    d(A_\omega(x,y),y)\le d(x,y),
\]
and comparison with \(a=y\) gives
\[
    d(x,A_\omega(x,y))\le d(x,y).
\]
Thus, the boundedness condition in Definition~\ref{def:average} is satisfied. The assumed
continuity of \(\omega\mapsto A_\omega(x,y)\) gives the required continuity with respect to the averaging parameter.

The above assumptions are essential. In a general complete metric space, the minimizer may fail to be unique, may fail to exist, or may not depend continuously on \(\omega\). For example, in a discrete metric space with more than one point, the minimizer is not unique for \(\omega=1/2\). Similarly, on spaces with non-unique geodesics, such as antipodal points on a sphere, there is no canonical continuous choice of a midpoint.

In other familiar settings, the weighted Fr\'echet minimizer is well posed, for example, in Euclidean spaces, Hadamard spaces, and suitable geodesically convex subsets of Riemannian manifolds.
}
\end{example} 

\begin{definition} \label{def:metric+intrinsic}
An average $A_\omega$ has the \textbf{metric property} if in addition to the five items in Definition \ref{def:average} it satisfies for any $x,y\in\Omega$, 
\begin{equation} \label{defd=metric property}
 d(x,A_{\omega}(x,y)) = \omega d(x,y),\, \quad \omega \in [0,1].
\end{equation}
We call an average with the metric property \textbf{intrinsic average}. 
\end{definition}

 Two canonical examples of an intrinsic average are the arithmetic mean in $\R^n$ and the geodesic average in a complete Riemannian manifold, see e.g.,~\cite{dyn2017manifold}. 
\begin{remark} \label{rem:metric property}
Two remarks about the metric property:
    \begin{enumerate}
        \item The metric property~\eqref{defd=metric property}, and the symmetry property of Definition~\ref{def:average}, imply that 
   $$ d(y,A_{\omega}(x,y))=(1-\omega)d(x,y).$$ This together with~\eqref{defd=metric property} leads to the following {\bf intermediate value property}:
        \begin{equation} \label{eqn:intermidiate}
        d\bigl(x,A_\omega(x,y)\bigr)+d\bigl(A_\omega(x,y),y\bigr)=d(x,y), \qquad \omega\in[0,1].
        \end{equation}
        \item 
        For $x=y$, \eqref{defd=metric property} reduces to identity on the diagonal, $x=A_{\omega}(x,x)$ (the third item in
     Definition~\ref{def:average}). 
    \end{enumerate}
\end{remark}

We present two examples of metric spaces with an intrinsic average, and one example of a metric space with an average that is not intrinsic. 

\begin{example}\label{example:compact sets}
Consider the metric space of nonempty, compact subsets of 
$\R^n$, endowed with the Hausdorff metric. We denote the collection of elements of this metric space by ${\cal K}_n$. Then, define for any set $B \in {\cal K}_n$,
   \[ \dist(a, B) = \min_{b \in B} \norm{a-b} , \quad a \in \R^n . \]
For any two sets $A, B \in {\cal K}_n$, we define the set of all their \textbf{metric pairs} as
   \begin{equation*}
       \Pi(A,B)=\bigg\lbrace (a,b) \colon \dist(a, B) = \norm{a-b} \, \text{ or } \,  \dist(b,A)=\norm{a-b} \bigg\rbrace.
   \end{equation*}
  Their Hausdorff distance $d_H(A,B)$, can be written in terms of their metric pairs,
  as 
   \begin{equation*}
   d_H(A,B) = \max \bigg\lbrace \norm{a-b} \colon (a,b) \in \Pi(A,B) \bigg\rbrace .
   \end{equation*}
Note that in this example, we can use the quantifiers 'min' and 'max' since the sets in ${\cal K}_n$ are compact. Moreover, the metric space $X=({\cal K}_n, d_H)$ is complete~\cite[Chapter 4]{rockafellar2009variational}, an important property when convergence of subdivision schemes is considered.
   
An intrinsic average (average with the metric property~\eqref{defd=metric property}) in  $({\cal K}_n, d_H)$ is,
   \begin{equation*}
  {\mathcal A}_{\omega}(A,B) = \bigg\lbrace \omega a+(1-\omega)b \colon (a,b) \in \Pi(A,B) \bigg\rbrace .
   \end{equation*}
 This average was designed in~\cite{artstein1989piecewise} to have the metric property. It is used there to approximate univariate set-valued functions (univariate functions with values in ${\cal K}_n$) by  ``piecewise linear approximants,'' built upon this average. In later works, this average is called {\bf metric average}, and is used for approximating set-valued functions by other types of classical approximation operators, see, e.g.,~\cite{dyn2014approximation}.
\end{example}
\begin{example} \label{example:Wasserstein}
The second example is in the space of probability measures on $\mathbb{R}^d$, $\mathcal{P}(\mathbb{R}^d)$, with finite $p$-th moment, denoted by 
\begin{equation}~\label{eqn:wasserstein_space}
    \mathcal{P}_p(\mathbb{R}^d) = \bigg\{ \mu\in\mathcal{P}(\mathbb{R}^d) \; \big| \;\int_{\mathbb{R}^d}\|x\|^p d\mu(x) < \infty \bigg\}.
\end{equation}
We define the metric via the functional $\mathcal{J}_p$ on a probability measure $\gamma$ over $\mathbb{R}^d \times \mathbb{R}^d$, given by
\begin{equation}~\label{eqn:wasserstein_functional}
    \mathcal{J}_p(\gamma) = \int_{\mathbb{R}^d\times \mathbb{R}^d}\|x-y\|^p d\gamma(x,y).
\end{equation}
In optimal transport terms, the integrand of $\mathcal{J}_p$ is the cost function; the Wasserstein distance is
\begin{equation}~\label{eqn:Wasserstein_distance}
    W_p(\mu, \nu) = \left( \min_{\gamma\in\Pi(\mu, \nu)}\mathcal{J}_p(\gamma) \right)^{\frac{1}{p}},
\end{equation}
where $\Pi(\mu,\nu)$ is the set of all joint measures with marginals $\mu$ and $\nu$,
\begin{equation}~\label{eqn:joint_measures}
    \Pi(\mu,\nu)=\{\gamma\in\mathcal{P}(\mathbb{R}^d\times\mathbb{R}^d)\;|\;\; (\pi^x)_\#\gamma=\mu,\;\;(\pi^y)_\#\gamma=\nu\}.
\end{equation}
Here, $\pi^x$ and $\pi^y$ are the projections onto the $x$- and $y$-coordinates, with $\#$ denoting pushforward. The set $\Pi(\mu,\nu)$ is nonempty, as it includes $\mu \times \nu$.

The space $\mathcal{P}_p(\mathbb{R}^d)$ equipped with $W_p$ is called \textbf{Wasserstein space}. The Wasserstein space admits \rev{ the Otto–Lott infinite-dimensional Riemannian formalism on suitable smooth subspaces~\cite{LottWassersteinRiemannian2008}. In the present work, however, we use only the metric/geodesic structure needed to define the average.} The average is then derived from the \textbf{geodesic} in the Wasserstein space. Specifically, when $\gamma$ of~\eqref{eqn:Wasserstein_distance} is obtained, the average between $\mu_{0}$ and $\mu_{1}$ is given by the pushforward,
\begin{equation*}
    {\mathcal A}_{\omega}(\mu_{0},\mu_{1}) = (\pi_\omega)_{\#}\gamma, \quad \omega \in [0, 1].
\end{equation*} 
Where we define $\pi_\omega(a,b) = (1-\omega)a +\omega b$ for any $a,b \in  \R^d$. The resulting ${\mathcal A}_{\omega}(\mu_{0},\mu_{1})$ is the shortest path between $\mu_{0}$ and $\mu_{1}$ in the Wasserstein space, see, e.g., \cite{santambrogio2015optimal}.
\end{example}
\begin{example} \label{example:hermiteaverage}
In the third example, we consider the domain \(\Omega = \mathbb{R}^n \times \mathbb{S}^{n-1}\). As a product of manifolds, this domain is a manifold where we interpret each point as a pair $(p,v)$, consisting of a point in the space and a direction associated with the point. 

In the geometric setting of the above $\Omega$, given two different pairs, $(p_0,v_0)$ and $(p_1,v_1)$, we consider a cubic curve that interpolates both points and tangent directions, described as follows. First, we define four control points: $\mathbf{P}_0 = p_0, \mathbf{P}_1 = p_0+c v_0, \mathbf{P}_2 = p_1-c v_1,$ and $\mathbf{P}_3= p_1$. Here $c>0$ is fixed. Then, the cubic B\'ezier curve interpolating these four control points is
\[
\mathbf{B}(t) = (1 - t)^3 \mathbf{P}_0 + 3(1 - t)^2 t \mathbf{P}_1 + 3(1 - t) t^2 \mathbf{P}_2 + t^3 \mathbf{P}_3, \quad t \in [0, 1] .
\]
One can easily verify that this curve passes through the averaged two points and the directions $v_0, v_1$ are the tangents of the curve at $p_0,p_1$, respectively. 

Finally, we define the average ${\mathcal A}_{\omega}$ as the pair consisting of the value of the B\'ezier curve at $t=\omega$ and the tangent of the  B\'ezier curve there. In~\cite{hofit2023geometric}, it is proven that ${\mathcal A}_{\omega}$ is an average. 
For completeness, in the case of two identical pairs, we define this pair as the average. This average plays a central role in solutions for the problem of geometric Hermite interpolation~\cite{hofit2023geometric, vardi2024hermite}. This problem is described later in Section~\ref{subsection:application_first_type_proximity}.

Interestingly, although the construction of ${\mathcal A}_{\omega}$ has proven useful in certain geometric contexts, to the best of our knowledge, the existence of a metric on $\Omega$ that renders this average the metric property has not been established, see, e.g.,~\cite{hofit2023geometric}. 

\end{example}


\section{Subdivision in CMS: Refinement, Limit Curves, and Convergence} \label{sec:main_convergence}

We now turn to the study of refinement and convergence in complete metric spaces (CMS), building on the preliminaries in Section~\ref{sec:CMS_perlimn}. We begin by introducing the necessary definitions and new concepts concerning subdivision schemes in a CMS. With these foundations in place, we proceed to establish several results on the convergence of such schemes, along with an additional result on their approximation quality. 
   
We start with an extension to the definition of the piecewise linear interpolant in Euclidean spaces~\cite{dyn1992subdivision_book} and the piecewise geodesic interpolant in manifolds~\cite{dyn2017manifold} to a metric space with an average. For that we consider a sequence of points $\PP=\{p_j\}_{j\in \Z}\subset \Omega$, and a sequence of parameters $T=\{t_j\}_{j\in \Z}\subset \R$, satisfying $t_j<t_{j+1}$ for $j\in \Z$.
\begin{definition} \label{def:piecewise_average}
 Let $Q = \{(t_j,p_j) \}_{j \in \Z}$, and define the following curve in $\Omega$,
    \begin{equation*}
        \pa{t} = \pa{A_w, Q; t} \coloneq A_{\omega}(p_j,p_{j+1}), \quad \omega = \frac{t-t_{j}}{t_{j+1}-t_j}, \quad  t \in [t_j, t_{j+1}] , \quad j \in \Z.
    \end{equation*}
    We call $\pa{A_w, Q; t}$ the \textbf{piecewise average interpolant} of $Q$. 
\end{definition}
By Definition~\ref{def:average}, items \ref{def:item:continuous} and \ref{def:item:endpoints}, the piecewise average interpolant is continuous and satisfies the interpolation conditions $\pa{t_j} = p_j,\quad j \in \Z$.


Here, we introduce the notion of refinement of a sequence consisting of elements of a CMS with an average $A_\omega$, and define the convergence of a sequence of sequences of type $Q$ to a curve over $\Omega$. We begin with the definition of subdivision schemes in a CMS with an average. 

From here on,  for simplicity, we assume that any CMS considered has an average, and we denote by $(\PP)_j$ the $j$-th element in the sequence $\PP$. \rev{In addition, we restrict the presentation to two types of 
parameterizations, primal and dual. In the primal parameterization $T^k = (\frac{1}{2})^k \Z = \frac{1}{2}T^{k-1}$, and in the dual  parametrization,  $T^0=\Z$ and $T^k=\frac{1}{2}\left(T^{k-1}+\frac{1}{2}\right)$. The dual parameterization is only used in Section~\ref{sec:proximity2}.} 

\begin{definition}[\rev{Locality of }refinement operator] \label{def:refinement}
\rev{Let $\PP  \in \Omega^\Z $ be a sequence} associated with a sequence of parameters \rev{$T$}. A binary refinement operator $S \colon \Omega^\Z \to \Omega^\Z$ generates a new sequence $S(\PP) \in \Omega^\Z$, such that $S$ is local in the sense that there exists $ M_S \in \N$ so that $S(\PP)_{2j}$ and $S(\PP)_{2j+1}$ are independent of $\PP_\ell$ for $\ell < j -  M_S $ and $ \ell > {j+ M_S}$. We call the constant $ M_S$ {\bf the locality range} of the operator $S$.
\end{definition}
\begin{definition}[Subdivision scheme] \label{def:subdivision}
\rev{Let $\PP^0$ be a sequence in $\Omega^\Z$, associated with a parameterization $T^0$}. A binary subdivision scheme is the repeated application of the refinement operator $S \colon \Omega^\Z \to \Omega^\Z $,
\begin{equation*}
    \PP^{k} = S(\PP^{k-1}) , \quad k \in \N  ,
\end{equation*}
where each $\PP^k$ is associated with \rev{the parameterization} $T^k=\{t^k_j\}_{j\in\Z}$ in the sense that the point $(\PP^k)_j=p^k_j$ is associated with the parameter $t^k_j$.
\end{definition}

A basic binary subdivision scheme that exists in any CMS with an average $A_w$ is defined by the refinement rule
\begin{equation}   \label{eqn:elementary_scheme}
 S(\PP)_{2j}=p_j,\quad S(\PP)_{2j+1}=A_\frac{1}{2}(p_j,p_{j+1}),\quad j\in\Z.
 \end{equation}
We term such a scheme {\bf elementary scheme}. The parameterization of the elementary scheme is the primal parameterization, namely, \rev{$p_j^k$ is associated with $t_j^k=j2^{-k}$. This choice of the parameterization guarantees that a point in any $\PP^k$, which is inserted at some level, has the same associated parameter value in $\R$ at all refinement levels.}
\rev{
\begin{remark}[Finite dependence under iteration] \label{rem:locality}
    Let \(S\) be a local binary refinement operator with locality range \(M_S\)
in the sense of Definition 4.2. Then, for every \(k\in\mathbb N\),
every \(j\in\mathbb Z\), and every \(r=0,\ldots,2^k-1\), the point
\[
    (S^k(\PP))_{2^k j+r}
\]
depends only on finitely many entries of the initial sequence \(\PP\). More precisely, a simple induction shows that it depends only on entries
\(p_\ell\) with
\[
    j+\left\lfloor \frac{r}{2^k}-M_S^{[k]}\right\rfloor
    \le \ell \le
    j+\left\lfloor \frac{r}{2^k}+M_S^{[k]}\right\rfloor,
    \qquad
    M_S^{[k]}=\sum_{n=0}^{k-1}\frac{M_S}{2^n}.
\]
In particular, since \(M_S^{[k]}<2M_S\), the dependence range is contained in
\[
    j-2M_S \le \ell \le j+2M_S .
\]
This is the finite-dependence property of the refined data. Note that the factor \(2\) in this finite-dependence bound is sharp in general; for instance, the linear corner-cutting scheme (see Section~\ref{sec:perlimin}) has \(M_S=1\), as some refined points depend on initial data two indices away, see, e.g.,~\cite{JEONG2021113446}.
\end{remark}
 }

The convergence of a sequence of refined sequences associated with sequences of parameters is defined as follows:
\begin{definition}[The limit curve]
\rev{Denote $Q^k=\{(t_j^k,p^k_j)\}_{j\in \Z}$, $k \in \N$ and let
$$\bigg \lbrace f^k(t)= \pa{A_w,Q^k; t} \bigg\rbrace_{k \in \N} , $$ 
be the sequence of piecewise average-interpolants of $\{Q^k\}_{k \in \N}$. We call the subdivision sequence $\{S^k(\PP)\}_{k \in \N}$ converging if the sequence ${f^k(t):\mathbb{R}\to\Omega}$ converges uniformly in the metric 
    \begin{equation}  \label{eqn:metric-functions}
    d_\infty (f,g)=\sup\bigg\{d(f(t),g(t)) \colon t\in \R \bigg\} ,
    \end{equation}
    with $f,g$ any two functions of $t\in\R$ with values in $\Omega$. 
We denote the limit function by $Q^\infty(t)$, $t\in \R$. We call a subdivision scheme $S$ convergent in $\D\subset\Omega^{\mathbb{Z}}$ if $S(\D)\subset \D$ and the sequence $\{S^k(\PP)\}_{k \in \N}$ created by it converges for any $\PP\in \D$.}
\end{definition}
\begin{remark}
We observe the following about the limit curve:
\begin{enumerate}
    \item (Continuity)
    From Definition~\ref{def:piecewise_average}, $f^k(t) = \pa{A_w,Q^k; t}$ is continuous in $\R$ for any $k \in \N$. Therefore, in any \rev{compact interval} $I \subset \R$, the limit $Q^\infty$ is continuous. The latter is true since the convergence is with respect to the sup metric~\eqref{eqn:metric-functions}, so the convergence is uniform in any \rev{compact interval}. 
    \item \rev{(Finite dependence) For each fixed refinement level \(k\), the interpolant \(f^k(t) = \pa{A_w,Q^k; t}\) depends only on finitely many entries of the initial sequence \(\PP\). More precisely, this follows from the finite-dependence property of the iterates \(S^k(\PP)\) described in Remark~\ref{rem:locality}, together with the fact that \(f^k\) is built from two neighboring level-\(k\) points. Consequently, whenever the subdivision sequence converges, the limit curve has the corresponding finite dependence on the initial data.}
\end{enumerate}
\end{remark}

\subsection{Main convergence theorem}

For $\PP = \{p_j\}_{j \in \Z}$  a sequence in $\Omega^\Z$, we define 
\begin{equation} \label{eqn:delta_def}
    \delta (\PP) = \sup_{j \in \Z} d(p_j,p_{j+1}) .
\end{equation}
This bound for sequences of points in $\Omega^\Z$ is central to the definition of the next two fundamental notions of convergence. \rev{Throughout the convergence results, we consider initial sequences \(P\) satisfying \(\delta(P)<\infty\).}
\begin{definition}(Contractivity)
\label{def:contractivity}
A subdivision scheme $S$ is termed \textbf{contractive} in $\D \subset \Omega^\Z$ if there exists $\mu \in (0,1)$ and $L \in \N$ such that, 
\begin{equation} \label{eqn:contractivity_factor}
     \delta (S^L(\PP)) \le \mu \delta( \PP) , \quad \PP \in \D. 
\end{equation} 
We call the factor $\mu$ \textbf{contractivity factor} and the integer $L$ \textbf{\rev{contractivity level}}.
\end{definition}
For $k>L$, a repeated application of the contractivity relation of Definition~\ref{def:contractivity} yields,
\begin{eqnarray} \label{eqn:repeated_contractivity}
     \delta (S^k(\PP)) \le C_{\scalebox{0.7}{$\PP$}} \, \scalebox{1.2}{$\mu$}^{\floor{\frac{k}{L}}} ,
\end{eqnarray}
where $C_\PP = C_\PP(S) = \max_{\ell=0,1,\ldots,L-1}\{ \delta (S^\ell(\PP)) \} $. 

It is important to recall that any linear subdivision scheme is converging if and only if it satisfies~\eqref{eqn:contractivity_factor} with some \rev{contractivity level} $L$. This is true since the convergence of a linear subdivision scheme is equivalent to the contractivity of its associated difference scheme, see, e.g.,~\cite[Theorem 3.2]{dyn1992subdivision_book}. 

In the following, we denote by $S^k(\PP)_j$ the $j$th element of the sequence $S^k(\PP)$. 
\begin{definition}[Displacement-safe]  \label{def:DS}
A subdivision scheme $S$ is \rev{displacement-safe} in $ \D \subset \Omega^\Z$ if there exists a nonnegative constant $C_S$ such that
\begin{equation} \label{eqn:DS}
d(S(\PP)_{2j},p_j )  \le C_S \delta(\PP ) ,\quad j \in \mathbb{Z} .
\end{equation}
for any $\PP \in \D$ and where $C_S$ is a constant depending on $S$ but is independent of $\PP$. 
\end{definition}
Note that when $C_S=0$ for a converging scheme $S$, the initial points, as well as any refined sequence of points, lie on the limit curve. Thus, $S$ is termed an {\bf interpolatory scheme}. Also note that any linear subdivision scheme is \rev{displacement-safe}, see~\cite[Remark 3.3]{dyn2017manifold}.

\rev{The following theorem for CMS-valued subdivision schemes} is analogous to the convergence result of manifold-valued subdivision schemes \cite[Theorem 3.6]{dyn2017manifold} for metric-space-valued subdivision schemes.
\begin{thm}[Convergence of contractive, \rev{displacement-safe} scheme] 
\label{thm:convergence}
Let $S$ be a subdivision scheme, and let $\D\subset \Omega^\Z$, such that if $\PP \in \D$, then $S(\PP) \in \D$. If $S$ is contractive in $\D$ with a contractivity factor $\mu$ and \rev{contractivity level} $L$, and is \rev{displacement-safe} in $\D$, then it converges to a continuous limit $Q^\infty(t) \in \Omega$, $t \in \R $, from any initial data $Q^0 = (T^0,\PP^0)$ with $\PP^0 \in \D$. 
\end{thm}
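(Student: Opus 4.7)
The plan is to prove that the sequence of piecewise-average interpolants $\{f^k\}_{k\in\N}$, with $f^k(t)=\pa{A_w,Q^k;t}$, is Cauchy in the metric $d_\infty$ of~\eqref{eqn:metric-functions}. Completeness of $(\Omega,d)$ will then give a pointwise limit, the uniform Cauchy bound will promote it to uniform convergence, and continuity of each $f^k$ (guaranteed by Definition~\ref{def:piecewise_average}) will propagate to the limit curve $Q^\infty$.

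First, I would unpack the contractivity hypothesis. Forward-invariance of $\D$ under $S$ ensures $\PP^k\in\D$ for every $k$, so iterating~\eqref{eqn:contractivity_factor} yields~\eqref{eqn:repeated_contractivity}:
\[
\delta(\PP^k)\le C_{\PP}\,\mu^{\lfloor k/L\rfloor},\qquad k\in\N.
\]
Since $\lfloor(k{+}1)/L\rfloor\ge\lfloor k/L\rfloor$ and $\mu\in(0,1)$, also $\delta(\PP^{k+1})\le C_{\PP}\,\mu^{\lfloor k/L\rfloor}$, so both $\delta(\PP^k)$ and $\delta(\PP^{k+1})$ decay at a common geometric rate.

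The heart of the argument is the uniform pointwise estimate $d\bigl(f^{k+1}(t),f^k(t)\bigr)\le K\bigl(\delta(\PP^k)+\delta(\PP^{k+1})\bigr)$ with a constant $K$ depending only on $S$. Fix $t\in\R$, and let $j$ be such that $t\in[t_j^k,t_{j+1}^k]$. By the interlacing~\eqref{eqn:parmeter_relation_one_step}, $t$ lies in some level-$(k{+}1)$ interval $[t_i^{k+1},t_{i+1}^{k+1}]$ with $i\in\{2j-1,2j,2j+1\}$. The boundedness property (item~5 of Definition~\ref{def:average}) gives $d\bigl(f^{k+1}(t),p_i^{k+1}\bigr)\le\delta(\PP^{k+1})$ and $d\bigl(f^k(t),p_j^k\bigr)\le\delta(\PP^k)$, so the triangle inequality reduces everything to bounding the ``cross-level'' distance $d(p_i^{k+1},p_j^k)$. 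When $i=2j$, the D-S property~\eqref{eqn:DS} applies directly and yields the bound $C_S\,\delta(\PP^k)$; when $i=2j+1$ or $i=2j-1$, one or two additional triangle-inequality hops through the adjacent even-indexed refined point $p_{2j}^{k+1}$ or $p_{2j-2}^{k+1}$ reduce to the D-S case at the cost of one $\delta(\PP^{k+1})$ term and at most one more $\delta(\PP^k)$ term. The \emph{main technical obstacle} is precisely this bookkeeping, since~\eqref{eqn:DS} controls only the even-indexed refined points and the interval $[t_{2j+1}^{k+1},t_{2j+2}^{k+1}]$ can straddle the original node $t_{j+1}^k$; however, the constants are uniform in $j$ and $t$ because each case requires only a bounded (in fact $\le 2$) number of hops.

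Combining the above with the geometric decay gives
\[
d_\infty(f^{k+1},f^k)\;\le\;K'\,\mu^{\lfloor k/L\rfloor},\qquad k\in\N,
\]
for some $K'$ depending only on $S$ and $C_{\PP}$. Since $\mu<1$, $\sum_{k\ge 0}\mu^{\lfloor k/L\rfloor}<\infty$, so $\{f^k\}$ is $d_\infty$-Cauchy. For every $t\in\R$, the sequence $\{f^k(t)\}$ is therefore Cauchy in the complete metric space $(\Omega,d)$ and converges to some $Q^\infty(t)\in\Omega$; the uniform Cauchy bound upgrades this to uniform convergence $f^k\to Q^\infty$ on $\R$; and, since each $f^k$ is continuous, the uniform limit $Q^\infty$ is continuous, completing the proof.
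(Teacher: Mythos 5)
Your proposal is correct and follows essentially the same route as the paper's proof: a triangle-inequality decomposition through the even-indexed refined point $S^{k+1}(\PP)_{2j}$, with the boundedness property of the average controlling the interpolant-to-node distances, the D-S property controlling the cross-level term, repeated contractivity giving geometric decay, and hence a Cauchy sequence in $d_\infty$ whose uniform limit is continuous by completeness. Your explicit case analysis over $i\in\{2j-1,2j,2j+1\}$ is in fact slightly more careful than the paper's treatment of the interval straddling at $t_{j+1}^k$, but it does not change the substance of the argument.
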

\begin{proof}
To prove the convergence of $S$, it is sufficient to show that the sequence of $\Omega$-valued functions $\big\lbrace f^k(t)= \pa{A_w,Q^k; t} \big\rbrace_{k \in \N}$ is a Cauchy sequence in the metric~\eqref{eqn:metric-functions}. \rev{Here, \(Q^k=(T^k,S^k(P))\), for \(k\in\mathbb N\), is the \(k\)-th refinement of the initial data, where the parameter sequence \(T^k\) is given by the chosen primal or dual parameterization}. Let $t \in [t_{2j}^{k+1}, t_{2j+2}^{k+1})$, for some fixed $j \in \mathbb{Z}$. By the triangle inequality, we have
\begin{eqnarray} \label{eqn:proof_main_ineq}
 d \left( \pa{A_w,Q^{k+1}; t},\pa{A_w,Q^k ; t} \right)
&\le & 
d \left( \pa{A_w,Q^{k+1}; t}, S^{k+1}(\PP)_{2j} \right) + \\
& & d\left( S^{k+1}(\PP)_{2j},S^{k}(\PP)_{j} \right) + 
d\left( S^{k}(\PP)_{j} , \pa{A_w,Q^{k}; t} \right) . \nonumber
\end{eqnarray}
We now bound each of the three terms on the right-hand side of the above inequality. For the first term in the bound in~\eqref{eqn:proof_main_ineq}, using the triangle inequality again,
we get 
\begin{multline*} 
    d \left( \pa{A_w,Q^{k+1}; t}, S^{k+1}(\PP)_{2j} \right) 
\le  \\ 
 d \left( \pa{A_w,Q^{k+1}; t},  S^{k+1}(\PP)_{2j+1} \right) +
 d \left(S^{k+1}(\PP)_{2j+1}, S^{k+1}(\PP)_{2j}\right).
\end{multline*}
By the boundedness property of Definition~\ref{def:average}, the first term in the above RHS is bounded by $\delta(S^{k+1}(\PP) )$, which also bounds the second term there. The repeated contractivity~\eqref{eqn:repeated_contractivity} of $S$ finally yields  
    \begin{equation} \label{eqn:bound_term1}
    d \left( \pa{A_w,Q^{k+1}; t}, S^{k+1}(\PP)_{2j} \right) 
\le 2 \delta(S^{k+1}(\PP) ) \le  2 C_{\PP}(S) \mu^{\floor{(k+1)/L}}
\end{equation}

To bound the second term in~\eqref{eqn:proof_main_ineq}, we note that since $S$ is \rev{displacement-safe}, 
\begin{equation}\label{eqn:bound_term2}
 d\left( S^{k+1}(\PP)_{2j},S^{k}(\PP)_{j} \right)\le C_S \delta(S^{k}(\PP)) \le C_S C_{\PP}(S) \mu^{\floor{k/L}}. 
 \end{equation}
For bounding the third term in~\eqref{eqn:proof_main_ineq}, we have,  
\[ t \in [t_{2j}^{k+1}\ , t_{2j+2}^{k+1}) \subset  [t_{j}^{k}, t^k_{j+1} ) , \]
which allows us to apply the boundedness property of Definition~\ref{def:average} once more to get
\begin{equation} \label{eqn:bound_term3}
    d \left( S^{k}(\PP)_{j} , \pa{A_w,Q^{k}; t} \right) \le  \delta(S^{k}(\PP) ) \le  C_{\PP}(S) \mu^{\floor{k/L}} .
\end{equation}
Next, replacing the three terms in the bound in~\eqref{eqn:proof_main_ineq} by their bounds:  \eqref{eqn:bound_term1}, \eqref{eqn:bound_term2}  and \eqref{eqn:bound_term3}, we arrive at
\begin{align*}
d \left( \pa{A_w,Q^{k+1}; t},\pa{A_w,Q^k ; t} \right)
& \le 2 C_{\PP}(S) \mu^{\floor{(k+1)/L}} + C_{\PP}(S) (C_S  +1 )\mu^{\floor{k/L}}  \\
& \le C_{\PP}(S) (C_S +3 )\mu^{\floor{k/L}} 
\end{align*} 
The last inequality is obtained since $\mu<1$ and $\floor{(k+1)/L} \ge \floor{k/L} $. 

Setting $B = C_{\PP}(S) (C_S +3) \mu^{-1}$ which is independent of $k$, and noting that $\floor{k/L}
\ge\frac{k}{L}-1 $, we conclude that
\[ d^k(t) = d\left( \pa{A_w,Q^{k+1}; t},\pa{A_w,Q^k ; t} \right)
\le B (\mu^{1/L})^k . \]
Finally, since $\mu^{\frac{1}{L}}<1$, the sequence $\{ d^k(t) \}_{k\in\Z} $ is summable and therefore, for any $m \in \N$,
\[
d\left( \pa{A_w,Q^{k+m}; t},\pa{A_w,Q^k ; t} \right)
\le \sum_{\ell=k}^{k+m} d^\ell(t) \le \sum_{\ell=k}^{\infty}  B (\mu^{1/L})^\ell \le   \frac{B}{1-\mu^{1/L}} (\mu^{1/L})^k .
\]
Thus, $\bigg\lbrace \pa{A_w,Q^k ; t} \bigg\rbrace_{k\in\Z}$ is a Cauchy sequence in the metric~\eqref{eqn:metric-functions}. 

By the continuity of $\pa{A_w,Q^k ; t} $ (see the text below Definition~\ref{def:piecewise_average}) and since the convergence is uniform in $t$ and the space is complete, we get a continuous limit in $\Omega$.
\end{proof}

\subsection{Application of the convergence theorem to elementary schemes with intrinsic averages} \label{subsec:app_convergence}

As a first application of Theorem~\ref{thm:convergence}, we prove the convergence of elementary schemes that are based on intrinsic averages. This generalizes similar results on manifold-valued subdivision schemes that use geodesic averages~\cite{dyn2017global, dyn2017manifold}.

We start by recalling the notion of \bsq{elementary scheme} defined in~\eqref{eqn:elementary_scheme}. A scheme $S$ in a CMS with average $A_\omega$ is called \bsq{elementary} if 
\begin{equation} \label{eqn:recall1}
S(\PP)_{2j}=p_j,\quad {\text and}\quad S(\PP)_{2j+1}= A_{\frac{1}{2}}(p_j , p_{j+1}).
\end{equation}
We also recall the notion of \bsq{intrinsic average}. The average $A_w$ is called \bsq{intrinsic} if it has the metric property. Namely, for all $x,y\in \Omega$
\begin{equation}\label{eqn:recall2}
 d(x,A_{\omega}(x,y)) = \omega d(x,y),\, \quad \omega \in [0,1].
 \end{equation}

\begin{prop} \label{prop:convergence_of_intrinsic_elementary_scheme}
    Any elementary subdivision scheme with an intrinsic average is \rev{strongly} converging.
\end{prop}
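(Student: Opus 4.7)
The plan is to apply Theorem~\ref{thm:convergence} directly by verifying both of its hypotheses -- displacement-safety and contractivity -- on the whole domain $\D = \Omega^{\Z}$, for the elementary scheme $S$ defined by \eqref{eqn:recall1}. Since $\D = \Omega^{\Z}$, the invariance requirement $S(\D)\subset \D$ is trivial.

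The displacement-safety condition is immediate. By the first rule in \eqref{eqn:recall1}, $S(\PP)_{2j}=p_j$, hence $d(S(\PP)_{2j},p_j)=0$, so \eqref{eqn:DS} holds with $C_S=0$; in particular, $S$ is interpolatory.

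For contractivity I would take $L=1$ and establish $\delta(S(\PP))\le \tfrac{1}{2}\delta(\PP)$. By \eqref{eqn:recall1}, every consecutive pair in $S(\PP)$ is of one of two forms. The first is $(p_j, A_{1/2}(p_j,p_{j+1}))$, whose distance equals $\tfrac{1}{2}d(p_j,p_{j+1})$ by the metric property \eqref{eqn:recall2}. The second is $(A_{1/2}(p_j,p_{j+1}), p_{j+1})$; here I would apply the symmetry axiom of Definition~\ref{def:average} at $\omega=1/2$ to rewrite the midpoint as $A_{1/2}(p_{j+1},p_j)$, and then invoke \eqref{eqn:recall2} once more to obtain the distance $\tfrac{1}{2}d(p_{j+1},p_j)=\tfrac{1}{2}d(p_j,p_{j+1})$. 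Taking the supremum over $j\in\Z$ yields $\delta(S(\PP))\le \tfrac{1}{2}\delta(\PP)$, so $S$ is contractive with factor $\mu=\tfrac{1}{2}$ and contractivity number $L=1$.

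With both hypotheses verified, Theorem~\ref{thm:convergence} produces a continuous limit curve $Q^{\infty}(t)\in\Omega$ from any initial data $\PP^0\in\Omega^{\Z}$, which is exactly the statement of the proposition. No step presents a serious obstacle; the key observation is that the metric property converts a single refinement step into an exact halving of consecutive distances, which is precisely why intrinsic averages render elementary schemes automatically convergent.
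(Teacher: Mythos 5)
Your proposal is correct and follows essentially the same route as the paper's proof: verify that $\D=\Omega^\Z$ is trivially invariant, that interpolation gives the D-S property with $C_S=0$, and that the metric property yields the contractivity bound $\delta(S(\PP))\le\tfrac12\delta(\PP)$ with $L=1$, then invoke Theorem~\ref{thm:convergence}. Your only addition is spelling out the two cases of consecutive pairs (using the symmetry axiom for the pair ending at $p_{j+1}$), a detail the paper leaves implicit.
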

\begin{proof}
It is enough to show that with an intrinsic average, an elementary scheme $S$ satisfies the conditions of Theorem~\ref{thm:convergence}. First, we note that $S$ in view of \eqref{eqn:recall1} is defined in $\Omega^\Z$,  since the average $A_\omega$ is defined for all $x,y\in\Omega$. Thus $\D$ in Theorem~\ref{thm:convergence} is $\Omega^\Z$, and the requirement on $S$ that for any $\PP \in \D$ also $S(\PP)\in \D$ is trivial. Next, by the first equality in~\eqref{eqn:recall1}, the scheme is interpolatory and thus has the \rev{displacement-safe} property. Finally by the second equality in~\eqref{eqn:recall1} and by the metric property \eqref{eqn:recall2} of the intrinsic average,   
we get
\begin{equation} \label{eqn:contractivity_elementary}
    \delta \big(S(\PP) \big)\le \frac{1}{2} \delta (\PP).
\end{equation}
Thus, $S$ has a contractivity factor $ \frac{1}{2}$ and a \rev{contractivity level} $1$,
and we conclude that $S$ satisfies the conditions of Theorem~\ref{thm:convergence}.
\end{proof}
   
\begin{remark}
Two comments follow, the first is about the locality of the limit, and the second is about the convergence of more schemes than the elementary.
\begin{enumerate}
\item 
\rev{
Since elementary schemes are interpolatory and use only neighboring data, their limits have a particularly simple finite-dependence property: for \(t\in[t_i^0,t_{i+1}^0]\), the limit depends only on the two initial points \(p_i^0\) and \(p_{i+1}^0\). This sharper observation is specific to elementary schemes and is stronger than the general finite-dependence bound of Remark~\ref{rem:locality}.
}
\item 
Similar to the manifold setting, Proposition~\ref{prop:convergence_of_intrinsic_elementary_scheme} can be extended to a wide family of schemes using the global approach in~\cite{dyn2017global}. An example of such a scheme is provided here in Subsection~\ref{subsec:application_second_prox}.
\end{enumerate}
\end{remark}

Two examples of elementary schemes in metric spaces with intrinsic average are Example~\ref{example:compact sets} and Example~\ref{example:Wasserstein}. Both examples yield converging subdivision schemes, as guaranteed by Proposition~\ref{prop:convergence_of_intrinsic_elementary_scheme}. In the first case, the subdivision scheme generates a converging sequence of sequences of compact sets, which tend to a continuous curve of compact sets see~\cite{dyn2014approximation}. In the second case, the scheme outputs a continuous curve in the Wasserstein space~\cite{banerjee2025efficient}.

\subsection{Approximation property of elementary schemes with intrinsic averages}

We conclude this section with an approximation result showing that the limit of an elementary scheme with an intrinsic average approximates a Lipschitz-continuous curve. A continuous curve $\Gamma \colon \R \to \Omega$ is Lipschitz continuous if there exists a constant $L_{\Gamma}>0$ such that
\begin{eqnarray}
    d(\Gamma(x),\Gamma(y)) \le L_{\Gamma} \abs{x-y}, \quad x,y \in \R .
\end{eqnarray}
\begin{thm}
    Let $(\Omega,d)$ be a CMS, let $\Gamma \colon \R \to \Omega$ be a Lipschitz continuous curve with Lipschitz constant $L_\Gamma$, let $\PP^0_j = \Gamma(t_j^0)$, $j\in \Z$, let  $h = \sup_{i\in\Z}\abs{t^0_{i+1}-t^0_i} $. If $S$ is an elementary scheme with an intrinsic average defined on  $\Omega$, then
    \[ d(\Gamma(t),S^\infty(\PP^0)(t)) \le C h, \quad  t \in \R .\]
The constant $C$ is independent of $t$ and $h$.
\end{thm}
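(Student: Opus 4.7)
The plan is to combine three ingredients from the paper: the contractivity estimate $\delta(S(\PP))\le\tfrac12\delta(\PP)$ from~\eqref{eqn:contractivity_elementary} for elementary intrinsic schemes, the fact that such schemes are interpolatory so $C_S=0$, and the three-term split~\eqref{eqn:proof_main_ineq} in the proof of Theorem~\ref{thm:convergence}. The hypothesis $\zeta=\tfrac12$ guarantees that the parameter mesh $h_k=\sup_i|t^k_{i+1}-t^k_i|$ also satisfies $h_k\le 2^{-k}h$, matching the point-level contractivity. The Lipschitz hypothesis transports through refinement cleanly: $\delta(\PP^0)=\sup_j d\bigl(\Gamma(t^0_j),\Gamma(t^0_{j+1})\bigr)\le L_\Gamma h$, and iteration of~\eqref{eqn:contractivity_elementary} yields $\delta(S^k(\PP^0))\le 2^{-k}L_\Gamma h$ for every $k\in\N$.

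With $C_S=0$ the middle term of~\eqref{eqn:proof_main_ineq} vanishes, while the bounds~\eqref{eqn:bound_term1} and~\eqref{eqn:bound_term3} (which rely only on the boundedness axiom of the average) reduce to
\[
d\bigl(f^{k+1}(t),f^k(t)\bigr)\le 2\delta(S^{k+1}(\PP^0))+\delta(S^k(\PP^0))\le \frac{2L_\Gamma h}{2^k},
\]
uniformly in $t\in\R$. Summing the resulting geometric series gives $d(f^0(t),Q^\infty(t))\le 4L_\Gamma h$. To close the remaining gap to $\Gamma$ itself, I would pick, for a fixed $t$, the index $j$ with $t\in[t^0_j,t^0_{j+1}]$: the boundedness axiom of $A_\omega$ (item~5 of Definition~\ref{def:average}) applied to $f^0(t)=A_\omega(\PP^0_j,\PP^0_{j+1})$ gives $d(\PP^0_j,f^0(t))\le\delta(\PP^0)\le L_\Gamma h$, while Lipschitz continuity gives $d(\Gamma(t),\PP^0_j)=d(\Gamma(t),\Gamma(t^0_j))\le L_\Gamma h$. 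Three triangle inequalities assemble these pieces into $d(\Gamma(t),Q^\infty(t))\le 6L_\Gamma h$, so $C=6L_\Gamma$ does the job.

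There is no genuine obstacle beyond bookkeeping; the heavy lifting has already been done inside the proof of Theorem~\ref{thm:convergence}. The only point requiring care is to verify that all intermediate constants depend solely on $L_\Gamma$ and $h$ — in particular, that the per-level bound is truly uniform in $t$ — which follows because each one-step estimate is expressed through the global quantity $\delta(S^k(\PP^0))$ and not through any $t$-dependent local difference.
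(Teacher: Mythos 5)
Your proof is correct, and it takes a genuinely different route from the paper's. The paper proves by induction a \emph{localized} form of the contractivity estimate, $d\bigl(p^k_{2^kj+r-1},p^k_{2^kj+r}\bigr)\le 2^{-k}L_\Gamma h$, and then sums \emph{horizontally within a single refinement level}: the telescoping sum of at most $2^k$ such gaps gives $d\bigl(p^0_j,p^k_{2^kj+r}\bigr)\le L_\Gamma h$ with no loss of a constant, after which it passes to the limit in $k$ and adds one Lipschitz term, arriving at $C=2L_\Gamma$. You instead sum \emph{vertically over levels}: the global contractivity $\delta(S^k(\PP^0))\le 2^{-k}L_\Gamma h$ (which is exactly what the paper's induction re-derives in localized form) is fed into the three-term Cauchy estimate already proved inside Theorem~\ref{thm:convergence}, with the middle term vanishing because the elementary scheme is interpolatory, and the resulting geometric series gives $d\bigl(f^0(t),Q^\infty(t)\bigr)\le 4L_\Gamma h$ uniformly in $t$; the final comparison of $f^0$ with $\Gamma$ via the boundedness axiom and Lipschitz continuity closes the argument with $C=6L_\Gamma$. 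Your version buys economy --- no fresh induction, full reuse of the convergence machinery --- at the price of a larger (but still admissible, since $C$ need only be independent of $t$ and $h$) constant; the paper's version is sharper because the within-level telescoping exploits the metric property of the average so that the $2^k$ gaps of size $2^{-k}L_\Gamma h$ sum to exactly $L_\Gamma h$ rather than accumulating across levels. One cosmetic remark: the hypothesis $\zeta=\tfrac12$ plays no essential role in your argument (nor, in fact, in the paper's); only the nesting property~\eqref{eqn:parmeter_relation_one_step} and the definition of $h$ are used.
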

\begin{proof}
Let $ I_k = \{1,\ldots,  2^k-1  \}$, $k\in \N$. We begin with proving by induction the following auxiliary claim,
\begin{equation} \label{eqn:app_induction_claim}
 d(p_{2^k j+(r-1)}^k,p_{2^k j+r}^k) \le 2^{-k} L_{\Gamma }h , \quad r \in I_k , \quad k \in \N .
\end{equation}
For the base case $k=1$, by the definition of the elementary scheme, followed by the metric property of the intrinsic average and the Lipschitz continuity of $\Gamma$, we have
\[ d(p^1_{2j},p^1_{2j+1}) =  d(p^0_j,p^1_{2j+1})= \frac{1}{2} d(p^0_{j},p^0_{j+1}) \le  \frac{1}{2} L_{\Gamma } h . \]
Assume that~\eqref{eqn:app_induction_claim} holds for $k=\ell$, we show it for $k=\ell+1$. 
First consider odd $r$. Then, 
\begin{equation} \label{eqn:r_odd}
d(p_{2^{\ell+1} j+r-1}^{\ell+1},p_{2^{\ell+1} j+r}^{\ell+1}) = d(p_{2^{\ell} j+\frac{r-1}{2}}^{\ell},p_{2^{\ell+1} j+r}^{\ell+1}) . 
\end{equation}
However, in the elementary scheme, $p_{2^{\ell+1} j+r}^{\ell+1}$ is the average between $p_{2^{\ell} j+\frac{r-1}{2}}^{\ell}$ and $p_{2^{\ell} j+\frac{r-1}{2}+1}^{\ell}$. Therefore, by the induction hypothesis and by the metric property of the intrinsic average, we get that
\[ d(p_{2^{\ell+1} j+r-1}^{\ell+1},p_{2^{\ell+1} j+r}^{\ell+1}) = \frac{1}{2} d(p_{2^{\ell} j+\frac{r-1}{2}}^{\ell}, p_{2^{\ell} j+\frac{r-1}{2}+1}^{\ell}) \le \frac{1}{2}\left( 2^{-\ell} L_{\Gamma }h  \right)  = 2^{-(\ell+1)} L_{\Gamma }h .  \]
In case $r$ is even, instead of~\eqref{eqn:r_odd} we consider
\[ d(p_{2^{\ell+1} j+r}^{\ell+1},p_{2^{\ell+1} j+r+1}^{\ell+1}) = d(p_{2^{\ell} j+\frac{r}{2}}^{\ell},p_{2^{\ell+1} j+r+1}^{\ell+1}) . \]
The rest of the proof of this case is analogous to the case where $r$ is odd. This concludes the proof of the induction. 

A conclusion from~\eqref{eqn:app_induction_claim} is that 
\[ d(p_{2^k j}^k,p_{2^k j+r}^k) \le  \sum_{v=1}^{r} d(p_{2^{\ell+1} j+v-1}^{\ell+1},p_{2^{\ell+1} j+v}^{\ell+1}) \le r 2^{-k} L_{\Gamma }h \le L_{\Gamma }h , \quad  r \in I_k .\]
Note that the right-hand side (RHS) above is independent of $k$. In addition, since $p^0_j=p_{2^k j}^k$, we have
\[ d(p_{j}^0,p_{2^k j+r}^k) \le L_{\Gamma }h . \]
Taking the limit with respect to $k$ yields, for any $t \in [t^0_i,t^0_{i+1}]$, that
\begin{eqnarray*}
    d(\Gamma(t),S^\infty(\PP)(t)) &\le&  d(\Gamma(t),\Gamma(t^0_i)) + d(\Gamma(t^0_i),S^\infty(\PP)(t))   \\
    &\le&  L_{\Gamma}(t-t_i^0) + L_{\Gamma}h \le 2L_{\Gamma}\, h \, .
\end{eqnarray*}
\end{proof}

We conclude this section by observing two types of convergence.
\begin{definition}
     A subdivision scheme is called \textbf{strongly} converging scheme in $\D \subset \Omega^\Z$ if it converges from any initial sequence there. On the other hand, a scheme is termed \textbf{weakly} converging in $\D$ if it converges to a continuous limit only from initial data in $\D$, satisfying a condition on the size of $\delta (\PP)$. 
\end{definition}

Recall that any converging linear scheme must obey the contractivity of Definition~\ref{def:contractivity}, and it also satisfies the \rev{displacement-safe} property of Definition~\ref{def:DS} (see the discussions below the definitions). Therefore, we conclude that any converging linear scheme is strongly converging, while weak converging schemes in linear spaces are nonlinear schemes that are in \textbf{proximity} to linear converging schemes, as done, for example, in~\cite{wallner2005convergence}. \rev{Strong convergence results also appear in nonlinear settings, for nonpositive curvature space, see~\cite{ebner2014stochastic}}. In the next section, we extend the analysis of convergence by proximity to CMS-valued subdivision schemes.

\section{Proximity of the first type} \label{sec:proximity1}

Theorem~\ref{thm:convergence} demonstrates how convergence can be established when the scheme is contractive. However, proving contractivity is often challenging. In classical non-stationary linear subdivision schemes, where the refinement rule varies with the level, an equivalent condition was introduced, showing that if a non-stationary process is sufficiently close to a stationary one, it inherits the regularity of the stationary scheme, see e.g.,~\cite{dyn2003exponentials}. This idea was later extended to the concept of proximity, where nonlinear subdivision schemes were compared to linear ones to prove convergence and smoothness of the nonlinear scheme~\cite{wallner2005convergence}.
Here, we further develop this approach by allowing proximity between \textbf{two nonlinear schemes}. We then show, within the CMS framework, that if one of the two schemes is contractive, weak convergence is inherited by the other scheme.

\subsection{Extending classical proximity to complete metric spaces}

We start with our first proximity condition:
\begin{definition}[Proximity of the first type]
\label{def:proximity}
   Let $X=(\Omega,d)$ be a complete metric space and let two refinement operators $S_1$ and $S_2$ be defined in $\Omega$. The operators $S_1$ and $S_2$ are in proximity in $\D \subset \Omega^\Z$, if there exist $C>0$ and $\varepsilon>0$ such that for any $\PP \in \D$,
        \begin{equation} \label{eqn:proximity}
            \sup_{j\in \Z} d(S_1(\PP)_j,S_2(\PP)_j) \le C (\delta (\PP))^{1+\varepsilon}. 
        \end{equation}
        Here $C$ and $\varepsilon$ are independent of $\PP$. 
\end{definition}
The above condition is a generalization of the classical proximity condition, where $S_1$ is a linear
subdivision scheme in a Euclidean space, and $S_2$ is a subdivision scheme with values in a manifold contained in the same Euclidean space, see~\cite[Section 3.3]{wallner2005convergence}. The first proximity condition was then extended from  $\varepsilon=1$ to $\varepsilon>0$ in~\cite[Definition 3]{wallner2006smoothness}, which is a \rev{slightly weaker condition than~\eqref{eqn:proximity} for small $\delta(\PP)$.} 
Here, we further extend the setting to CMS, without assuming that one of the refinement operators is linear. \rev{In other words, this condition treats the two schemes $S_1$ and $S_2$ symmetrically. In the following theorem, one scheme is assumed to be strongly convergent ($S$); proximity then implies weak convergence of the other scheme ($W$).}

\begin{thm} \label{thm:main_proximity}
    Let $S$ be a subdivision scheme defined on $\Omega$ relative to parameter sequences $\{T^k\}_{k=0,1,2,\ldots}$ (see Definition~\ref{def:subdivision}), and such that there exists $\D \subset \Omega^\Z$ where $S$ is contractive  with contractivity factor $\mu$ and \rev{contractivity level} $L$. Let $W$ be a subdivision scheme defined on $\Omega$ satisfying:
    \begin{enumerate}
        \item 
        $W$ is defined relative to the same parameter sequences as $S$, namely  relative to $\{T^k\}_{k=0,1,2,\ldots}$.
        \item 
        For all $\PP \in \D,\   W(\PP) \in \D$.
        \item 
        The subdivision schemes $S^L$ and $W^L$ are in proximity of Definition~\ref{def:proximity} in $\D$, with a positive constant $C=C_{(S^L,W^L)}$ and $\varepsilon>0$ in~\eqref{eqn:proximity}.
        \item 
        $W$ is \rev{displacement-safe} in $\D$.
    \end{enumerate}
    Then, $W$ converges from any $\PP \in \D$ satisfying
    \begin{equation} \label{eqn:boundondeltaP}
    \delta (\PP) < \left(\frac{1-\mu }{2C_{(S^L,W^L)}} \right)^{1/\varepsilon}. 
    \end{equation}
\end{thm}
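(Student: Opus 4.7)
The plan is to deduce from the proximity assumption that $W$ inherits contractivity from $S$ (with the same contractivity number $L$ but a slightly worse factor), and then to feed this into the Cauchy-sequence machinery of Theorem~\ref{thm:convergence}. The core calculation is a double triangle inequality that inserts $S^L(\PP)$ next to $W^L(\PP)$ at consecutive indices: for any $\PP \in \D$ and $j \in \Z$,
\begin{align*}
d\bigl(W^L(\PP)_j, W^L(\PP)_{j+1}\bigr)
&\le d\bigl(W^L(\PP)_j, S^L(\PP)_j\bigr) + d\bigl(S^L(\PP)_j, S^L(\PP)_{j+1}\bigr) + d\bigl(S^L(\PP)_{j+1}, W^L(\PP)_{j+1}\bigr) \\
&\le 2\, C_{(S^L, W^L)}\, \bigl(\delta(\PP)\bigr)^{1+\varepsilon} + \mu\, \delta(\PP),
\end{align*}
where the outer terms are controlled by the proximity~\eqref{eqn:proximity} and the middle term by the contractivity of $S$ on $\D$. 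Taking $\sup_j$ yields the key inequality
\[
\delta\bigl(W^L(\PP)\bigr) \le \bigl( \mu + 2\, C_{(S^L,W^L)}\, \delta(\PP)^\varepsilon \bigr)\, \delta(\PP).
\]

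Under~\eqref{eqn:boundondeltaP}, the bracketed factor at $\PP=\PP^0$ is strictly less than $1$; set $\tilde\mu := \mu + 2\,C_{(S^L,W^L)}\, \delta(\PP^0)^\varepsilon$. Then $\delta(W^L(\PP^0)) \le \tilde\mu\, \delta(\PP^0)$, which again satisfies~\eqref{eqn:boundondeltaP}; and because $W(\D)\subset\D$ the whole orbit $\{W^{nL}(\PP^0)\}_{n\ge 0}$ stays in $\D$. Iterating the key inequality gives $\delta\bigl(W^{nL}(\PP^0)\bigr) \le \tilde\mu^n\, \delta(\PP^0)$. Writing any $k\ge 0$ as $k=nL+\ell$ with $0\le \ell < L$ and applying the $L$-step contraction to the shifted starting point $W^\ell(\PP^0)$, this promotes to
\[
\delta\bigl(W^k(\PP^0)\bigr) \le K_{\PP^0}\, \tilde\mu^{\floor{k/L}}, \qquad K_{\PP^0} := \max_{0\le \ell < L} \delta\bigl(W^\ell(\PP^0)\bigr),
\]
which is the direct analogue of~\eqref{eqn:repeated_contractivity} for $W$.

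The Cauchy estimate used inside the proof of Theorem~\ref{thm:convergence} relies only on such a geometric decay of $\delta$ together with the D-S constant of $W$; applied verbatim to the piecewise-average interpolants $\pa{A_w,Q^k;t}$ built from the $W$-iterates of $\PP^0$, it shows that $\{\pa{A_w,Q^k;t}\}_k$ is Cauchy in the sup metric~\eqref{eqn:metric-functions}. Completeness of $(\Omega,d)$ and uniform continuity of the interpolants then deliver a continuous limit curve, finishing the proof. The main obstacle is the last display: iterating the $L$-step contractivity along the shifted orbits requires each $W^\ell(\PP^0)$, $0\le \ell < L$, to lie in the region where the bracketed factor is $<1$, which is not automatic from $\delta(\PP^0)<\eta$ alone. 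I would handle this either by enlarging the contractivity domain to $\{\Q\in\D : \delta(\Q) \le K_{\PP^0}\}$ and reverifying $\mu + 2\,C_{(S^L,W^L)}\, K_{\PP^0}^\varepsilon < 1$ (possibly at the cost of a slightly tighter version of~\eqref{eqn:boundondeltaP}), or by extracting a one-step $\delta$-growth bound for $W$ from the D-S property and locality of $W$ (Lemma~\ref{lemma:locality}) to pass from $\delta(\PP^0)$ to a uniform bound on $K_{\PP^0}$.
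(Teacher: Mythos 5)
Your argument is essentially the paper's own proof: the same triangle-inequality decomposition inserting $S^L(\PP)$ between consecutive entries of $W^L(\PP)$, the same resulting recursion $\delta(W^{L}(\PP)) \le \bigl(\mu + 2C_{(S^L,W^L)}\,\delta(\PP)^{\varepsilon}\bigr)\delta(\PP)$ iterated along the $W$-orbit (the paper phrases this as an induction with a threshold $\nu < \bigl((1-\mu)/(2C_{(S^L,W^L)})\bigr)^{1/\varepsilon}$ and factor $\mu_W = \mu + 2C_{(S^L,W^L)}\nu^{\varepsilon}$), followed by the same appeal to Theorem~\ref{thm:convergence} via the D-S property and invariance of $\D$. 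The obstacle you flag for $L>1$ --- controlling $\delta(W^{\ell}(\PP^0))$ for $0<\ell<L$ so that the $L$-step contraction can be applied along shifted orbits --- is a genuine subtlety, but the paper does not resolve it either (it simply asserts that $W$ is contractive once $\delta(\PP)\le\nu$ and invokes Theorem~\ref{thm:convergence}), so your explicit awareness of this point, together with your proposed fixes, goes beyond the published argument rather than falling short of it.
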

The proof of the Theorem follows the lines of the proof of~\cite[Theorem 2]{wallner2005convergence} and generalizes it for CMS, nonlinear $S$, arbitrary \rev{contractivity level} $L$, and $\varepsilon < 1$. We provide the proof here for completeness.
\begin{proof}    
First, we show the existence of a contractivity factor of $W$ for any $\PP \in \D$. Indeed, by the triangle inequality, for any $\ell \ge L$, 
\begin{multline} \label{eqn:proximity_bound1}
    d((W^\ell \PP)_i,(W^\ell \PP)_{i+1})
  \le   
  d\left((W^L W^{\ell-L} \PP)_i,(S^L W^{\ell-L} \PP)_{i} \right) + \\
  d\left((S^L W^{\ell-L} \PP)_{i},(S^L W^{\ell-L} \PP)_{i+1}\right) + d\left((S^L W^{\ell-L} \PP)_{i+1},(W^L W^{\ell-L} \PP)_{i+1}\right)
\end{multline}
Since $S$ has a contractivity factor $\mu $ and a \rev{contractivity level} $L$, and since $W$ maps $\D$ into itself, the middle term in the RHS of~\eqref{eqn:proximity_bound1} is bounded by $\mu \, \delta \left(W^{\ell-L}(\PP)\right)$. For the other two terms in~\eqref{eqn:proximity_bound1}, we use proximity and bound each term by $C_{(S^L,W^L)}\left(\delta (W^{\ell-L}(\PP)) \right)^{1+\varepsilon}$. Replacing the three terms in~\eqref{eqn:proximity_bound1} by their bounds, and taking the supremum over $i$, we get
\begin{equation} \label{eqn:recursion}
    \delta (W^\ell (\PP)) 
  \le \mu \, \delta (W^{\ell-L}(\PP))+ 2C_{(S^L,W^L)}\left(\delta (W^{\ell-L}(\PP)) \right)^{1+\varepsilon},
\end{equation}
For $\nu$ satisfying $0<\nu < \left(\frac{1-\mu }{2C_{(S^L,W^L)}} \right)^{1/\varepsilon}$, we define
\begin{equation} \label{eqn:mu_W_def}
 \muW : = \mu + 2C_{(S^L,W^L)} \nu^{\varepsilon}  .  
\end{equation} 
Clearly, $ \muW \in (0,1)$. To complete the proof, we claim the following,
 \begin{equation} \label{eqn:claimforinduction}
      \text{If  } \delta (\PP) \le \nu \text{   then  } \delta (W^{k L} (\PP)) \le \muW^k \delta (\PP) , \quad k \in \mathbb{N}. 
 \end{equation}
We use~\eqref{eqn:recursion} and show~\eqref{eqn:claimforinduction} by induction on $k$. For $k=1$, using $\ell=L$ in \eqref{eqn:recursion} we get,
\begin{equation} \label{eqn:proximity_first_step}
\begin{aligned}
\delta (W^L (\PP)) & \le \mu \delta (\PP) + 2C_{(S^L,W^L)} (\delta (\PP))^{1+\varepsilon} \\ & \le \left(\mu + 2C_{(S^L,W^L)}(\delta (\PP))^\varepsilon \right) \delta (\PP) \\  & \le \left(\mu + 2C_{(S^L,W^L)}\nu^\varepsilon \right) \delta (\PP) = \muW \delta (\PP) .
\end{aligned}  
\end{equation}

 The last inequality follows from the monotonicity of $x^\varepsilon$. Thus, \eqref{eqn:claimforinduction} holds for $k=1$. Assume~\eqref{eqn:claimforinduction} holds for $k$; we now show that it holds for $k+1$. Note that by the above induction assumption and since $(\muW)^k \in (0,1)$, we get from~\eqref{eqn:claimforinduction} that $\delta (W^{k L} (\PP)) \le \delta (\PP) \le \nu$. Therefore, by~\eqref{eqn:recursion} with $\ell = (k+1)L$ and the inductive assumption we obtain,
 \begin{equation*}
\begin{aligned}
    \delta (W^{(k+1)L} (\PP)) & \le \mu  \delta (W^{k L} (\PP)) +  2C_{(S^L,W^L)} \left(\delta (W^{k L} (\PP))\right)^{1+\varepsilon} \\
    & = \left(\mu + 2C_{(S^L,W^L)} (\delta (W^{k L}(\PP))^\varepsilon \right)  \left(\delta (W^{k L} (\PP))\right) \\ &\le \muW  \left(\delta (W^{k L} (\PP))\right) \le \muW^{k+1} \delta (\PP) .
\end{aligned}  
\end{equation*}
The conclusion is that under the condition $\delta (\PP) \le \nu$, the subdivision scheme $W$ has a contractivity factor $\muW$ with a \rev{contractivity level} $L$. Note that $\nu$ is independent of $\PP$. 

Finally, the weak convergence of the scheme $W$ follows by Theorem~\ref{thm:convergence} since $W$ has a contractivity factor when $\delta(\PP)$ is small enough, it is \rev{displacement-safe} in $\D$, and it maps $\D$ onto itself.
\end{proof}

\begin{remark}
Two comments about the proximity of Theorem~\ref{thm:main_proximity}:
\begin{enumerate}
    \item The condition~\eqref{eqn:boundondeltaP} implies that $W$ is converging weakly in $\D$.
    \item The contractivity factor $\muW$ in~\eqref{eqn:mu_W_def} can be made arbitrarily close to $\mu$ of $S$, by choosing small enough $\nu$.
    \item In view of Theorem~\ref{thm:main_proximity}, the proximity condition is of interest in cases $\delta (\PP)< 1$. In these cases, when $\varepsilon$ is smaller, the proximity required between $S$ and $W$ in~\eqref{eqn:proximity} is weaker and the condition over the data density $\delta (\PP)$ in~\eqref{eqn:boundondeltaP} is more restrictive when $\frac{1-\mu }{2C_{(S^L,W^L)}}<1$, which is more probable. 
\end{enumerate}
\end{remark}

\subsection{Application of the proximity of the first type to two Hermite elementary schemes} \label{subsection:application_first_type_proximity}

In the classical Hermite interpolation of functions, we are provided with samples of a smooth function along with its derivatives at the sampling points. Specifically, a first-order Hermite sample consists of the function's value and its first derivative at a sampling location. In geometric terms, this corresponds to sampling a smooth curve, where a first-order sample consists of a point on the curve and the tangent vector of the curve at that point.

Each tangent vector comprises two distinct components: direction and magnitude (length). The magnitude of a tangent vector represents the traversal velocity of the curve at the sampling point. If we assume the curve is regular, namely that no zero-length tangent vectors occur, we can always employ an arc-length parameterization. Under such parameterization, we traverse the curve at a constant, unit velocity, thereby standardizing all tangent magnitudes to one.

Consequently, when focusing purely on geometric properties, the tangent vectors can be treated as unit vectors, simplifying the representation so that the relevant geometric information is entirely captured by their directions. This simplification is formalized in the following definition. 
\begin{definition}[The first-order geometric Hermite interpolation problem for curves in $\R^n$]
    Consider a regular $C^1$ curve $\gamma \colon \R \to \R^n$. We aim to construct an approximating curve $ \widetilde{\gamma} \approx \gamma$ from a set of discrete samples of $\gamma$, at $\{t_j:j=1,2,\ldots,J\} $. Each sample consists of the pair of n-tuples $(\gamma(t_j),\dot\gamma(t_j))$, where $\dot\gamma(t_j)$ is the normalized tangent vector of $\gamma$ at $t_j$. The interpolation conditions read 
    \begin{equation} \label{eqn:int_condition_hermite}
        \widetilde{\gamma}(t_j)=\gamma(t_j)\quad \text{and}\quad  
 \dot{\widetilde{\gamma}}(t_j)=  \dot\gamma(t_j), \quad j=1,2,\ldots,J.
    \end{equation}
\end{definition}

The example we describe here directly links to Example~\ref{example:hermiteaverage}, where we consider the domain \(\Omega \subset \mathbb{R}^n \times \mathbb{S}^{n-1}\). The data $\PP = \{(p_i,v_i)\}_{i \in \Z}$ consists of a sequence of pairs, each pair holds the point and its associated normalized tangent. We measure the distance between two pairs $(p,v)$ and $(q,w)$ with the $\ell_2$ mixed metrics,
\begin{equation} \label{eqn:metric_hermite}
        d\big((p,v),(q,w)\big) = \sqrt{\norm{p-q}^2 + (d_S(v,w))^2} ,
\end{equation}
where $d_S(\cdot,\cdot)$ is the geodesic distance on the unit sphere in $\R^n$ (the length of a connecting great circle arc), and $\norm{\cdot-\cdot}$ is any norm in $\R^n$.

\subsubsection{Two elementary Hermite schemes}

We begin with the first naive elementary scheme in the above metric space $X=(\Omega,d)$. The corresponding average $A_w$ of any two pairs $(p,v),(q,u) \in \Omega$, provided $v$ and $u$ are not antipodal points, is
\begin{equation*}
    A_w\left( (p,v),(q,u) \right)  = \left( (1-w)p+wq, s_w(v,u) \right) .
\end{equation*}
Here, $s_w(v,u) = \frac{\sin((1 - w)\theta)}{\sin(\theta)} v + \frac{\sin(w\theta)}{\sin(\theta)} u$, where $\theta = \arccos(v \cdot u)$. This average is intrinsic with respect to the metric~\eqref{eqn:metric_hermite} and the structure of the tensor product $\Omega$. The induced elementary scheme takes the form~\eqref{eqn:elementary_scheme} and its strong convergence is guaranteed by Proposition~\ref{prop:convergence_of_intrinsic_elementary_scheme}. The explicit refinement rules of this first elementary scheme are
\begin{equation} \label{eqn:naive_hermite_elemntary}
    S(\PP)_{2i} = \left( p_i, v_i \right) , \quad
 S(\PP)_{2i+1} = \left( \frac{1}{2}(p_i + p_{i+1}) , s_{\frac{1}{2}}(v_i,v_{i+1}) \right) , \quad i \in \Z.
\end{equation}
The elementary geometric scheme in~\eqref{eqn:naive_hermite_elemntary} refines the points and tangents separately, making it impossible to improve the accuracy of the limit curve to the sampled curve due to the additional information of the tangents. This observation suggests that the elementary scheme should integrate both position and tangent information effectively. The following scheme realizes this conclusion. It is a circle-preserving geometric scheme inspired by~\cite{hofit2023geometric}, see Example~\ref{example:hermiteaverage}. This scheme uses, for the refinement of the points, a cubic Hermite Bézier curve that meets the interpolation conditions defined in~\eqref{eqn:int_condition_hermite}. Its explicit refinement rules are 
\begin{equation} \label{eqn:hofit_like_hermite_scheme}
W(\PP)_{2i} = \left( p_i, v_i \right) ,  \quad 
W(\PP)_{2i+1}=\left(\frac{1}{2}(p_i+p_{i+1})-\frac{\alpha_i}{8}(v_{i+1}-v_i), s_{\frac{1}{2}}(v_i,v_{i+1}) \right) , \quad i \in \Z.
\end{equation}   
Here, $\alpha_i = \frac{\norm{p_{i+1}-p_{i}}}{\cos^2{(\theta_i/4)}}$ with
$\theta_i = d_S\left(v_{i},v_{i+1} \right)$. Note that this is analogous to the construction in Example~\ref{example:hermiteaverage} where $c=\frac{\alpha_i}{3}$.

\subsubsection{The proximity and its implications}

The first result shows the proximity between the two elementary Hermite schemes:
    \begin{lemma} \label{lemma:proximity_Hermite}
         Let $\PP \in \D$ be Hermite data that satisfies $\norm{p_{j+1}-p_j}<1$ and $\theta_j = d_S \left(v_j,v_{j+1} \right) < 1$, $j\in\Z$. Then, there exists $C>0$, independent of $\PP$, such that,
        \begin{equation} \label{eqn:proximity_hermite}
            \sup_{j\in \Z} d\big(S(\PP)_j,W(\PP)_j \big) \le C \big(\delta (\PP) \big)^2 . 
        \end{equation}
    \end{lemma}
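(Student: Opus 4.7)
My plan is to exploit the fact that $S$ and $W$ already coincide in three of their four component slots. For every $i \in \Z$, the even entries satisfy $S(\PP)_{2i} = W(\PP)_{2i} = (p_i, v_i)$, and the spherical components of the odd entries are both $s_{1/2}(v_i, v_{i+1})$, so they agree as well. Consequently, only the Euclidean position part of $S(\PP)_{2i+1} - W(\PP)_{2i+1}$ contributes to the mixed metric~\eqref{eqn:metric_hermite}, giving the one-line identity
\begin{equation*}
    d\bigl(S(\PP)_{2i+1}, W(\PP)_{2i+1}\bigr) \;=\; \frac{\alpha_i}{8}\,\norm{v_{i+1}-v_i}.
\end{equation*}

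The main step is then to bound this quantity uniformly by $C(\delta(\PP))^2$. First I would bound $\alpha_i$: since $\theta_i < 1$, we have $\theta_i/4 < 1/4 < \pi/2$, and because cosine is positive and decreasing there, $\cos^2(\theta_i/4) \ge \cos^2(1/4)$, whence $\alpha_i \le \norm{p_{i+1}-p_i}/\cos^2(1/4)$. Next, since $v_i, v_{i+1} \in \mathbb{S}^{n-1}$, the Euclidean chord satisfies $\norm{v_{i+1}-v_i}_2 = 2\sin(\theta_i/2) \le \theta_i$, and by norm equivalence in $\R^n$ the same bound holds, up to a finite constant, for the arbitrary norm appearing in~\eqref{eqn:metric_hermite}. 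Multiplying these two estimates yields
\begin{equation*}
    \frac{\alpha_i}{8}\,\norm{v_{i+1}-v_i} \;\le\; \tilde{C}\,\norm{p_{i+1}-p_i}\,\theta_i
\end{equation*}
for a constant $\tilde{C} > 0$ depending only on the dimension and the choice of norm in~\eqref{eqn:metric_hermite}.

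Finally, the definition of the mixed metric delivers $\norm{p_{i+1}-p_i} \le d\bigl((p_i,v_i),(p_{i+1},v_{i+1})\bigr) \le \delta(\PP)$, and likewise $\theta_i \le \delta(\PP)$, so $\norm{p_{i+1}-p_i}\,\theta_i \le (\delta(\PP))^2$. Taking the supremum over $j \in \Z$ of the one-line identity from the first paragraph, combined with the above, proves~\eqref{eqn:proximity_hermite} with $C = \tilde{C}$.

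I do not foresee any genuine obstacle. The proof essentially reduces to the observation that the two schemes differ in only one of their four component slots, together with the chord-arc inequality and the uniform lower bound on $\cos^2(\theta_i/4)$ granted by the hypothesis $\theta_j < 1$. The only mild care required is to separate the intrinsic geometric estimate from the norm-equivalence constant, but this bookkeeping does not alter the quadratic scaling in $\delta(\PP)$.
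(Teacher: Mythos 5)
Your proposal is correct and follows essentially the same route as the paper's proof: vanishing distance at even indices, the identity $d(S(\PP)_{2i+1},W(\PP)_{2i+1})=\frac{\alpha_i}{8}\norm{v_{i+1}-v_i}$ at odd indices, the lower bound $\cos^2(\theta_i/4)\ge\cos^2(1/4)$, the chord--arc estimate, and the domination of each component by the mixed metric, yielding the same quadratic bound in $\delta(\PP)$. Your explicit handling of the norm-equivalence constant for a non-Euclidean norm is a minor refinement the paper glosses over, but it does not change the argument.
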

    \begin{proof}
For the even indices, by interpolation, we have $d\left( S(\PP)_{2i},W(\PP)_{2i} \right)=0$. For the odd indices we have,
\begin{equation*}
  d\left( S(\PP)_{2i+1},(W(\PP)_{2i+1} \right)^2 = \norm{\frac{ \alpha_i }{8}(v_{i+1}-v_{i})}^2  . 
\end{equation*}
Since $\theta_i<1$ it is bounded away from $2\pi$, and we have that ${\cos^2\left({\theta_i}/{4}\right)} \ge {\cos^2\left({1}/{4}\right)}$. Also, note that $\norm{v_{i+1}-v_{i}}\le d_S(v_{i+1},v_i)$, and that any separate component of the metric~\eqref{eqn:metric_hermite} is bounded by the whole metric. These three observations lead to,  
\begin{eqnarray}\label{eqn:BoundOnTermWithTangents}
         \norm{\frac{ \alpha_i }{8}(v_{i+1}-v_{i})} &=& \norm{\frac{\norm{p_{i+1}-p_i} }{8 \cos^2(\theta_i/4)}(v_{i+1}-v_{i})} \\ 
        &\le& d_S\big((v_{i+1},v_{i}) \big) \norm{p_{i+1}-p_i} \frac{1}{8\cos^2(1/4)} \le \frac{1}{8\cos^2(1/4)}  \delta(\PP)^2 .
\end{eqnarray}
 This completes the proof of the lemma with $C= \frac{1}{8\cos^2(1/4)} $.   
\end{proof}

To apply Theorem~\ref{thm:main_proximity}, we note that $S$ is an elementary scheme with an intrinsic average and thus it strongly converges with contractivity factor $\frac{1}{2}$ and \rev{contractivity level} $1$. Thus, we have four conditions on $W$ to check. First, the schemes $W$ and $S$ have the same parameterization, Second, by Lemma~\ref{lemma:proximity_Hermite}, $W$ and $S$ are in proximity of the first type, and third, since both schemes are interpolatory, we automatically obtain the \rev{displacement-safe} condition. Therefore, the only condition left to check is stated and proved in the next lemma. 
\begin{lemma} \label{lemma:hermite_closeness}
    Let $\D$ consist of all Hermite data $\PP=\{(p_i,v_i)\}_{i\in\Z}$, satisfying 
    $\theta = \sup_ {i\in\Z} d_S(v_i,v_{i+1}) < 1 $,
    and $\sup_{i\in\Z}\ \norm{p_{i+1} - p_i}< 1$. 
    Then, for any $\PP \in \D,\   W(\PP) \in \D$.
\end{lemma}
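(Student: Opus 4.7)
The plan is to verify the two defining conditions of $\D$ separately for $W(\PP)$: the bound on consecutive angular distances between tangents, and the bound on consecutive distances between positions. Since $W$ interpolates (the even-indexed pairs of $W(\PP)$ coincide with the pairs of $\PP$), the only nontrivial differences to bound are those between an even-indexed pair $W(\PP)_{2i}=(p_i,v_i)$ and the odd-indexed pair $W(\PP)_{2i+1}=(q_i,\tilde v_i)$ next to it, where $q_i=\frac12(p_i+p_{i+1})-\frac{\alpha_i}{8}(v_{i+1}-v_i)$ and $\tilde v_i=s_{1/2}(v_i,v_{i+1})$.

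First I would handle the tangent component. Since $s_{\omega}(v_i,v_{i+1})$ is the geodesic average on $\mathbb S^{n-1}$, the metric property on the sphere gives $d_S(v_i,\tilde v_i)=d_S(\tilde v_i,v_{i+1})=\tfrac12 \theta_i$. Hence
\[
\sup_{j\in\Z} d_S\bigl(W(\PP)^{\mathrm{tan}}_{j+1},W(\PP)^{\mathrm{tan}}_{j}\bigr)\le \tfrac12\,\sup_{i\in\Z}\theta_i<\tfrac12<1,
\]
so the first condition on $\D$ is satisfied, in fact with a strictly contracted constant.

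Next I would control the position component. By the triangle inequality,
\[
\|p_i-q_i\|\le \tfrac12\|p_{i+1}-p_i\|+\tfrac{\alpha_i}{8}\|v_{i+1}-v_i\|, \qquad \|q_i-p_{i+1}\|\le \tfrac12\|p_{i+1}-p_i\|+\tfrac{\alpha_i}{8}\|v_{i+1}-v_i\|.
\]
Substituting $\alpha_i=\|p_{i+1}-p_i\|/\cos^2(\theta_i/4)$ and using the elementary bounds $\|v_{i+1}-v_i\|\le d_S(v_{i+1},v_i)=\theta_i<1$ together with $\cos^2(\theta_i/4)\ge\cos^2(1/4)$ (valid since $\theta_i<1$), the same computation as in the proof of Lemma~\ref{lemma:proximity_Hermite} (see \eqref{eqn:BoundOnTermWithTangents}) yields
\[
\max\bigl\{\|p_i-q_i\|,\|q_i-p_{i+1}\|\bigr\}\le \Bigl(\tfrac12+\tfrac{1}{8\cos^2(1/4)}\Bigr)\|p_{i+1}-p_i\|=:C'\,\|p_{i+1}-p_i\|,
\]
with $C'<1$ (numerically $C'\approx 0.63$). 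Taking the supremum over $i$ and using the hypothesis $\sup_i\|p_{i+1}-p_i\|<1$ gives $\sup_j\|W(\PP)^{\mathrm{pos}}_{j+1}-W(\PP)^{\mathrm{pos}}_j\|<C'<1$, proving the second condition on $\D$.

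The argument is essentially a direct calculation, so there is no substantial obstacle; the only care needed is to exploit the hypothesis $\theta_i<1$ twice, once to bound $\cos^2(\theta_i/4)$ from below by the uniform constant $\cos^2(1/4)$, and once to convert the spherical gap $\theta_i$ into the chordal gap $\|v_{i+1}-v_i\|$. Combining the two bounds above gives $W(\PP)\in\D$, completing the proof. As an immediate consequence, together with Lemma~\ref{lemma:proximity_Hermite} and the strong convergence of $S$, all four hypotheses of Theorem~\ref{thm:main_proximity} are met, so $W$ converges weakly on $\D$ from any initial data satisfying \eqref{eqn:boundondeltaP}.
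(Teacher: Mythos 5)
Your proof is correct and follows essentially the same route as the paper: the metric property of the spherical average handles the tangent condition, and the triangle inequality plus the bounds $\|v_{i+1}-v_i\|\le\theta_i<1$ and $\cos^2(\theta_i/4)\ge\cos^2(1/4)$ handle the position condition. The only (immaterial) difference is that you keep the sharper constant $\tfrac12+\tfrac{1}{8\cos^2(1/4)}\approx 0.63$, whereas the paper coarsens $\cos(\theta_i/4)>\cos(\pi/3)=\tfrac12$ to conclude $\|\widetilde{p}_{2i\pm1}-p_i\|<\|p_{i+1}-p_i\|<1$.
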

\begin{proof}
Denote $W(\PP) = \widetilde\PP=\{ (\widetilde{p}_{i},\widetilde{v}_{i})\}_{i\in \Z}$, 
and $\widetilde{\theta} =\sup_{i\in\Z} d_S(\widetilde{v}_i, \widetilde{v}_{i+1})$.
By the metric property of the average of the tangents, the angular distance between consecutive tangents does not increase after applying the refinement rules of $W$ for the tangents (see
equation~\eqref{eqn:hofit_like_hermite_scheme}), and therefore, 
 $\widetilde{\theta} \le\theta< 1$.
Thus $\widetilde {\PP}$ satisfies the condition on the angular angles between consecutive tangents, and it remains to show that $\norm{\widetilde{p}_{i+1} - \widetilde{p}_i}<1 $.

By the definition of the metric (see equation~\eqref{eqn:metric_hermite}) and since $W$ is an interpolatory scheme,
$$d\left(W(\PP)_{2i\pm1},W(\PP)_{2i}\right)^2=\norm{\widetilde{p}_{2i\pm1}-p_{i}}^2+d_S^2\left(\widetilde{v}_{2i\pm1},v_i\right).$$
Now we bound the first term in the RHS of the above equation. By the refinement rules of $W$ for the points (see  equation~\eqref{eqn:hofit_like_hermite_scheme}) 
$$\norm{\widetilde{p}_{2i\pm1}-p_i}=\norm{\pm\frac{1}{2}(p_{i\pm1}-p_i)\pm \frac{\alpha_i}{8}(v_{i\pm1}-v_i)}\le\frac{1}{2}\norm{p_{i\pm1}- p_i}+ \frac{\alpha_i}{8}\norm{v_{i\pm1}-v_i}.$$
Since $\alpha_i = \frac{\norm{p_{i+1}-p_{i}}}{\cos^2{(\theta_i/4)}}$, and $\theta_i< 1$,
$\cos{(\theta_i/4)} > \cos(\frac{1}{4}) > \cos(\frac{\pi}{3})=\frac{1}{2}$, and we obtain,
$\frac {\alpha_i}{8}<\frac{1}{2}\norm{p_{i+1}-p_{i}}$. Recalling that $\norm{v_{i+1}-v_{i}}\le d_S(v_{i+1},v_i) < 1 $, we finally arrive at
$$\norm{\widetilde{p}_{2i\pm1}-p_i}<\norm {p_{i+1}-p_i}<1 ,$$ 
implying that $\widetilde \PP\in \D$.
\end {proof}

In view of Lemmas~\ref{lemma:proximity_Hermite} and~\ref{lemma:hermite_closeness}, all the conditions of Theorem~\ref{thm:main_proximity} hold on the domain $\D$. Hence,
\begin{cor}
    The scheme $W$ is weakly converging in $\D$ for any $\PP\in\D$ that satisfies $\delta(\PP) <1$.   
    \end{cor}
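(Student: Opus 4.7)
The plan is to verify, one by one, the hypotheses of Theorem~\ref{thm:main_proximity} with $S$ the naive elementary Hermite scheme of~\eqref{eqn:naive_hermite_elemntary} and $W$ the circle-preserving scheme of~\eqref{eqn:hofit_like_hermite_scheme}, and then show that the density bound~\eqref{eqn:boundondeltaP} it produces is weaker than the hypothesis $\delta(\PP)<1$. The set $\D$ in the application is the one defined in Lemma~\ref{lemma:hermite_closeness}.

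First, I would identify the relevant constants. Since the average built into $S$ is intrinsic for the metric~\eqref{eqn:metric_hermite}, Proposition~\ref{prop:convergence_of_intrinsic_elementary_scheme} (via~\eqref{eqn:contractivity_elementary}) gives that $S$ is contractive on all of $\Omega^{\Z}$, hence on $\D$, with contractivity factor $\mu=\tfrac{1}{2}$ and contractivity number $L=1$. Both $S$ and $W$ are interpolatory, so both are displacement-safe with $C_S=C_W=0$, and they carry the same primal parameterization. Lemma~\ref{lemma:hermite_closeness} gives the invariance $W(\D)\subset \D$, and Lemma~\ref{lemma:proximity_Hermite} gives the proximity of $S^{L}=S$ and $W^{L}=W$ on $\D$ with exponent $\varepsilon=1$ and constant $C_{(S,W)}=\tfrac{1}{8\cos^{2}(1/4)}$. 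Thus all four numbered hypotheses of Theorem~\ref{thm:main_proximity} hold on $\D$.

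Next I would plug these constants into the density threshold~\eqref{eqn:boundondeltaP}. With $\mu=\tfrac{1}{2}$, $\varepsilon=1$, and $C_{(S,W)}=\tfrac{1}{8\cos^{2}(1/4)}$, the bound in~\eqref{eqn:boundondeltaP} becomes
\[
\left(\frac{1-\mu}{2\,C_{(S,W)}}\right)^{1/\varepsilon}
=\frac{1/2}{\,1/(4\cos^{2}(1/4))\,}
=2\cos^{2}(1/4).
\]
Since $\cos(1/4)>\cos(\pi/3)=\tfrac{1}{2}$, we have $2\cos^{2}(1/4)>\tfrac{1}{2}$; in fact $\cos(1/4)>\cos(\pi/4)=\tfrac{\sqrt 2}{2}$, so $2\cos^{2}(1/4)>1$. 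Consequently, any input with $\delta(\PP)<1$ satisfies~\eqref{eqn:boundondeltaP}.

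Finally, I must check that $\delta(\PP)<1$ places $\PP$ inside $\D$ so that Theorem~\ref{thm:main_proximity} applies. By the definition of the mixed metric~\eqref{eqn:metric_hermite}, $d\big((p_i,v_i),(p_{i+1},v_{i+1})\big)<1$ dominates each of its two components, hence $\|p_{i+1}-p_i\|<1$ and $d_{S}(v_i,v_{i+1})<1$ for every $i$, which are precisely the two conditions defining $\D$ in Lemma~\ref{lemma:hermite_closeness}. Theorem~\ref{thm:main_proximity} then yields the convergence of $W$ from such $\PP$, which is exactly the claim. There is no real obstacle in the argument: all the work is done in Lemmas~\ref{lemma:proximity_Hermite} and~\ref{lemma:hermite_closeness} and in the contractivity of the intrinsic elementary scheme; the only mildly delicate point is the numerical verification that $2\cos^{2}(1/4)>1$, which is what ensures the clean statement ``$\delta(\PP)<1$'' rather than a more awkward threshold.
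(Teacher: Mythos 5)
Your proof is correct and follows essentially the same route as the paper: it verifies the four hypotheses of Theorem~\ref{thm:main_proximity} using the contractivity of the intrinsic elementary scheme ($\mu=\tfrac12$, $L=1$), Lemma~\ref{lemma:proximity_Hermite}, Lemma~\ref{lemma:hermite_closeness}, and the interpolatory (hence D-S) property of $W$. Your explicit evaluation of the threshold~\eqref{eqn:boundondeltaP} as $2\cos^{2}(1/4)>1$, and the observation that $\delta(\PP)<1$ forces $\PP$ into $\D$ via the mixed metric, are details the paper leaves implicit and are a welcome addition.
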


\section{Proximity of the second type} \label{sec:proximity2}

In this section, we introduce a second family of proximity conditions. This proximity was first suggested for a specific setting in~\cite[Remark 3.7]{lipovetsky2019c1}. We formulate it for CMS-valued subdivision schemes, generalize it by adding the notion of order, and finally show how to use it for proving convergence and later for proving smoothness in $\R^n$.

\subsection{The proximity and its induced convergence theorem}
We start with the proximity condition.
\begin{definition}[Proximity of the second type of order $m$] \label{def:2nd_type_prox_def}
Let $X=(\Omega,d)$ be a complete metric space and let two subdivision schemes $S_1$ and $S_2$ be defined on $\Omega^\Z$. We say that the subdivision scheme $S_1$ is in proximity of order $m$ to $S_2$ in $\D \subset \Omega^\Z$ if there exists a constant $E>0$ and a factor $\eta$ such that for any $\PP \in \D$,
\begin{equation} \label{eqn:2nd_type_proximity}
\sup_{i \in \Z} d\left( S_1(\PP^j)_i,S_2(\PP^j)_i \right) \le E \eta^{j+1} , \quad 0 < \eta < \left( \frac{1}{2} \right)^{m} ,\quad j \in \Z_+ ,
\end{equation}
where $\PP^j = S_1^j(\PP)$, $E>0$ is independent of $j$, and $\eta$ is independent of $\PP$ and $j$.
\end{definition}
\begin{remark}
A few comments about the proximity of Definition~\ref{def:2nd_type_prox_def}: 
\begin{enumerate}
    \item The order $m$ indicates how close is the proximity between the two schemes. In particular, as $m$ grows, the two subdivision operators act increasingly similar on the sequences of refined points by $S_1$. Note that there is a natural hierarchy, which means that if $S_1$ is in proximity of the second type of order $m$ to $S_2$, it is also in proximity of any order less than $m$ to $S_2$. The meaning of the order becomes clearer later in this section.  
    \item Additional notes that highlight differences between the first type of proximity of Definition~\ref{def:proximity} and the second type proximity of Definition~\ref{def:2nd_type_prox_def}.
    \begin{enumerate}[label=(\roman*)]
        \item The first type of proximity focuses on the difference between the two schemes after one refinement step, that is, between the refinement operators. The second type follows the proximity of $ S_1$ to $S_2$ along the refinement levels of $ S_1$, which is required to become stronger as the refinement level increases.
        \item The constant $E$ of the second type proximity may depend on the sequence $\PP$ and, in particular, on $\delta(\PP)$, while the constant in~\eqref{eqn:proximity} is independent of $\PP$.
        \item 
        An alternative requirement for proximity of the second type is~\eqref{eqn:2nd_type_proximity} but only for $j \ge J$ with some fixed $J>1$, $J\in\N$, which is independent of $\PP$.
        
        This requirement is sufficient since for each sequence $\PP$ we can select $E$, so that the bound will be valid for $j=0,\ldots, J$. Since $J$ is independent of $\PP$ then $E$ depends on $\PP$.
        \item
        The proximity of the second type is not symmetric with respect to the two schemes, while the proximity of the first type is.
    \end{enumerate}
\end{enumerate}
\end{remark}
In the following, we require the proximity of $S_1^L$ to $S_2^L$. In this case, \eqref{eqn:2nd_type_proximity} becomes
\[ \sup_{i \in \Z} d\left( S_1^L( S_1^{jL}(\PP))_i,S_2^L( S_1^{jL}(\PP))_i \right) \le E \eta^{j+1} , \quad \eta < \left( \frac{1}{2} \right)^{m} ,\quad j \in \Z_+ . \]
In particular, note that the power of $\eta$ scales like $\cdot/L$ with respect to the number of refinements.

The following result shows the convergence of a subdivision scheme based on a second type proximity of order $0$ to a scheme with a contractivity factor. \rev{Here, strong convergence is transferred from one scheme to the other.} 
\begin{thm} \label{thm:convergence_proximity_2ndType}
    Let $S_2$ be a subdivision scheme defined on $\Omega^\Z$ relative to parameter  sequences $\{T^k\}_{k=0,1,2,\ldots}$ (see Definition~\ref{def:subdivision}), and such that there exists $\D \subset \Omega^\Z$ where $S_2$ has a contractivity factor $\mu\in (0,1)$ and a \rev{contractivity level} $L\in \N$. Let $S_1$ be a subdivision scheme defined on $\Omega$ satisfying:
    \begin{enumerate}
        \item 
        $S_1$ is defined relative to the same parameter sequences as $S_2$, namely  relative to $\{T^k\}_{k=0,1,2,\ldots}$.
        \item 
        For all $\PP \in \D$,  $S_1(\PP) \in \D$.
        \item 
        The subdivision scheme $S_1^L$ is in proximity of the second type of order $m=0$  to $S_2^L$ in $\D$.
        \item 
        $S_1$ is \rev{displacement-safe}.
    \end{enumerate}
Then, $S_1$ converges to a continuous limit curve in $\D$ from any $\PP \in \D$.
\end{thm}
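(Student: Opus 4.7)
My plan is to follow the structure of the proof of Theorem~\ref{thm:convergence}, replacing the contractivity of $S_1$ (which is not assumed) by a geometric decay of $\delta(S_1^{jL}(\PP))$ extracted from the proximity to $S_2$. Set $\PP^j = S_1^{jL}(\PP) \in \D$ (by the invariance assumption) and $a_j = \delta(\PP^j)$. Applying the triangle inequality to each pair of adjacent points at level $(j{+}1)L$ and inserting $S_2^L(\PP^j)$ as an intermediary yields
\[
a_{j+1} \le 2 \sup_{i \in \Z} d\bigl(S_1^L(\PP^j)_i, S_2^L(\PP^j)_i\bigr) + \delta\bigl(S_2^L(\PP^j)\bigr).
\]
The first term is bounded by $2E\eta^{j+1}$ via the second-type proximity of order $0$, while the second is bounded by $\mu\, a_j$ via the contractivity of $S_2$ on $\D$. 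The resulting linear recursion $a_{j+1} \le \mu\, a_j + 2E\eta^{j+1}$, solved by induction, gives $a_j \le C\,\nu^j$ for some $\nu \in (0,1)$ slightly larger than $\max(\mu,\eta)$ (to absorb the boundary case $\mu = \eta$), with a constant $C$ depending only on $a_0$, $E$, $\mu$, and $\eta$.

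Next, I would show that the piecewise-average interpolants $\{f^k\}_{k \in \Z_+}$ of the refined sequences $\{S_1^k(\PP)\}_{k \in \Z_+}$ form a Cauchy sequence in the metric $d_\infty$. Mirroring the three-term decomposition of the proof of Theorem~\ref{thm:convergence}, for $t$ lying in an interval at level $jL$, I would bound
\[
d\bigl(f^{(j+1)L}(t), f^{jL}(t)\bigr) \le T_1 + T_2 + T_3,
\]
where $T_1$ and $T_3$ involve the distance from $f$ to the relevant sequence point at levels $(j{+}1)L$ and $jL$ respectively, and $T_2$ is the ``vertical'' distance between a point of $\PP^{j+1}$ and a corresponding point of $\PP^j$. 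The estimates $T_1 \le 2 a_{j+1}$ and $T_3 \le a_j$ follow directly from the boundedness property of $A_\omega$ (item~5 of Definition~\ref{def:average}) combined with the nesting relation~\eqref{eqn:parmeter_relation_one_step}.

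The main obstacle is $T_2$, since no displacement-safe property is assumed for $S_1$. My plan is to split it again through $S_2^L(\PP^j)$: the second-type proximity controls one summand by $E\eta^{j+1}$, while the remaining piece, measuring how $S_2^L$ displaces a point of $\PP^j$, is controlled using the boundedness property on the piecewise-average interpolant associated with $(T^{(j+1)L}, S_2^L(\PP^j))$, whose $\delta$-size is at most $\mu a_j$ by contractivity. Summing the three terms yields $d_\infty(f^{(j+1)L}, f^{jL}) \le C'\,\nu^j$; intermediate indices $jL < k < (j{+}1)L$ are handled by the same inner triangle-inequality argument and absorbed into constants. Summability of $\{\nu^j\}$ then gives that $\{f^k\}$ is Cauchy in $d_\infty$; completeness of $(\Omega, d)$ together with the uniform convergence in $t$ produces a continuous $\Omega$-valued limit curve $Q^\infty$, as required.
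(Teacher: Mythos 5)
Your first stage (the recursion $a_{j+1}\le \mu\,a_j+2E\eta^{j+1}$ obtained by inserting $S_2^L(\PP^j)$ between adjacent points, and its solution with rate $\max(\mu,\eta)$ up to a harmless correction) and your bounds $T_1\le 2a_{j+1}$, $T_3\le a_j$ coincide with the paper's argument, which derives exactly the same recursion and then bounds $\delta(S_1^\ell(\PP))\le \alpha^{\floor{\ell/L}}(\delta(S_1^r(\PP))+2E\floor{\ell/L})$ with $\alpha=\max\{\eta,\mu\}$, absorbing the case $\mu=\eta$ by a polynomial factor instead of enlarging the rate. The problem is your treatment of $T_2$. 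Splitting $d\bigl(S_1^L(\PP^j)_{2^L i},\PP^j_i\bigr)$ through $S_2^L(\PP^j)_{2^L i}$ controls one summand by $E\eta^{j+1}$, but the residual term $d\bigl(S_2^L(\PP^j)_{2^L i},\PP^j_i\bigr)$ is a \emph{cross-level} displacement, and neither contractivity nor the boundedness property of $A_\omega$ says anything about it: boundedness only compares the interpolant of $\bigl(T^{(j+1)L},S_2^L(\PP^j)\bigr)$ with points \emph{of that same refined sequence}, and at $t=t^{(j+1)L}_{2^L i}$ the interpolant simply equals $S_2^L(\PP^j)_{2^L i}$, giving no relation to $\PP^j_i$. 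Concretely, in $\R$ the rule $S(\PP)_{2j}=p_j+1$, $S(\PP)_{2j+1}=\tfrac12(p_j+p_{j+1})+1$ is contractive with factor $\tfrac12$ yet displaces every point by $1$ at every level, so $\delta\bigl(S_2^L(\PP^j)\bigr)\le\mu a_j$ cannot bound this term. Your detour therefore only relocates the missing displacement estimate from $S_1^L$ to $S_2^L$ without supplying it.

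You correctly observed that the theorem statement does not list a displacement-safe hypothesis; the paper's own proof nevertheless bounds the middle term by $C_{S_1}\,\delta(S_1^\ell(\PP))$ ``since $S_1$ is D-S,'' i.e., it invokes Definition~\ref{def:DS} for $S_1$ with constant $C_{S_1}$ even though this assumption is absent from the hypothesis list (and Remark~\ref{rem:prox_2} asserts it is not needed). So the gap you sensed is real, but your repair does not close it: some displacement-safety assumption on $S_1$ (or on $S_2$, transferred to $S_1$ via the proximity bound $E\eta^{j+1}$, which decays fast enough to be summable) must be added for the Cauchy estimate on the middle term to go through.
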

\begin{proof} 
Let $Q^\ell=\{(t_j^\ell,S_1^{\ell}(\PP)_j)\}_{j\in \Z}$, with  $\ell \in \N$, the $\ell$th refinement of $\PP$ using $S_1$. We show that the sequence $\big\lbrace \pa{A_w,Q^\ell; t} \big\rbrace_{\ell \in \N}$ is a Cauchy sequence in the metric~\eqref{eqn:metric-functions}. For this, it is enough to show that the distance $d \big( \pa{A_w, Q^{\ell+1}; t},\pa{A_w, Q^\ell ; t} \big) $ is summable over $\ell$ uniformly in $t\in \R$.  

\rev{
Let \(\ell=mL+r\), where \(m=\lfloor \ell/L\rfloor\) and \(r\in\{0,\ldots,L-1\}\). Since \(S_1(\D)\subset \D\), we have \(S_1^r(\PP)\in \D\). Applying the second-type proximity of \(S_1^L\) to \(S_2^L\) to the initial sequence \(S_1^r(\PP)\), there exists a constant \(E_r>0\), independent of the refinement level, such that
\[ \sup_{j\in\mathbb Z}
d\!\left(
S_1^L\bigl((S_1^L)^q(S_1^r(\PP))\bigr)_j,
S_2^L\bigl((S_1^L)^q(S_1^r(\PP))\bigr)_j
\right) \le E_r\eta^{q+1}, \qquad q\in\mathbb Z_+ . \]
We now set $E=\max_{0\le r<L}E_r$. This maximum is finite because there are only finitely many residue
classes modulo \(L\). Taking \(q=m-1\), we obtain, for \(m\ge 1\),
\[ \sup_{j\in\mathbb Z}
d\!\left(
S_1^\ell(\PP)_j,
S_2^L\bigl(S_1^{\ell-L}(\PP)\bigr)_j
\right) \le E\eta^m . \]
Therefore, using the contractivity of \(S_2\), we get
\begin{equation} \label{eqn:delta_S1P}
    \delta(S_1^\ell(\PP))
\le \mu\,\delta(S_1^{\ell-L}(\PP)) + 2E\eta^m,
\qquad m=\lfloor \ell/L\rfloor .
\end{equation}
Iterating \eqref{eqn:delta_S1P} gives
\[ \delta(S_1^{mL+r}(\PP)) \le \mu^m\delta(S_1^r(\PP)) +
2E\sum_{q=1}^{m}\mu^{m-q}\eta^q . \]
Let \(\alpha=\max\{\mu,\eta\}<1\). Since
\(\mu^{m-q}\eta^q\le \alpha^m\), we conclude that
\begin{equation} \label{eqn:W_recursive_bound} 
    \delta(S_1^\ell(\PP)) \le \alpha^{\lfloor \ell/L\rfloor} \left( \delta(S_1^r(P))+2E\lfloor \ell/L\rfloor
\right),
\end{equation}
where \(r=\ell-L\lfloor \ell/L\rfloor\).}

Let $t \in [t_{2j}^{\ell+1}, t_{2j+2}^{\ell+1})$, for some $j \in \mathbb{Z}$. As in~\eqref{eqn:proof_main_ineq}, we obtain also here,
\begin{multline} \label{eqn:W_proof_main}
        d \left( \pa{A_w,Q^{\ell+1}; t},\pa{A_w,Q^\ell ; t} \right) \le  
    d \left( \pa{A_w,Q^{\ell+1}; t}, S_1^{\ell+1}(\PP)_{2j} \right) \\ 
    + d \left( S_1^{\ell+1}(\PP)_{2j}, S_1^{\ell}(\PP)_{j} \right) + d \left( S_1^{\ell}(\PP)_{j} , \pa{A_w,Q^{\ell}; t} \right)
\end{multline}
The first and third terms on the RHS of~\eqref{eqn:W_proof_main} are bounded as in~\eqref{eqn:bound_term1}. Then, we use~\eqref{eqn:W_recursive_bound} to get
\begin{align*}
    d \left( \pa{A_w,Q^{\ell+\nu}; t}, S_1^{\ell+\nu}(\PP)_{2^\nu j} \right) &\le  2^\nu \delta (S_1^{\ell+\nu} (\PP))  \\
    &\le  \alpha^{\floor{(\ell+\nu)/L}} \left(  \delta (S_1^{r_\nu} (\PP)) + 2E \floor{(\ell+\nu)/L} \right)  \\
    &\le  \alpha^{\floor{\ell/L}} \left(  \delta (S_1^{r_\nu} (\PP)) + 2E (\floor{\ell/L}+1) \right) ,   
\end{align*}
where $r_\nu=(\ell+\nu)-L\floor{(\ell+\nu)/L} \in \{0,\ldots,L-1\}$. Recall that $C_{\PP} = C_{\PP}(S_1) =\max_{r=0,\ldots,L-1} \delta (S_1^r (\PP))$ is a function of $\PP,L$ and $S_1$ but is independent of $\ell$. Note that replacing $\delta (S_1^{r} (\PP))$ by $C_{\PP}$ in~\eqref{eqn:W_recursive_bound} provides that for any $\ell \in \N$,
\begin{equation*}
    \delta (S_1^\ell (\PP)) 
    \le \alpha^{\floor{\ell/L}} \left(  C_{\PP} + 2E \floor{\ell/L} \right) .  
\end{equation*} 
Therefore, for any $\ell \in \N$,
\begin{equation} \label{eqn:Wparts12}
    d \left( \pa{A_w,Q^{\ell+\nu}; t}, S_1^{\ell+\nu}(\PP)_{2^\nu j} \right) \le  \alpha^{\floor{\ell/L}} \left( C_{\PP} + 2E + 2E \floor{\ell/L} \right) , \quad \nu=0,1.
\end{equation}
The middle term on the RHS of~\eqref{eqn:W_proof_main} is bounded by $C_{S_1} \delta (S_1^{\ell} (\PP))$ since $S_1$ is \rev{displacement-safe}. Using the bound~\eqref{eqn:W_recursive_bound} we get
\begin{align} \label{eqn:W_middleterm} 
d \left( S_1^{\ell+1}(\PP)_{2j},S_1^{\ell}(\PP)_{j} \right) & \le C_{S_1} \left( \alpha^{\floor{\ell/L}} \left(  \delta (S_1^r (\PP)) + 2E \floor{\ell/L} \right) \right) \nonumber \\ 
& \le \alpha^{\floor{\ell/L}} \left(  C_{S_1} C_{\PP} + 2 C_{S_1} E \floor{\ell/L} \right) .
\end{align}
Thus, combining~\eqref{eqn:Wparts12} and \eqref{eqn:W_middleterm} we obtain,
\begin{eqnarray*}
    d \left( \pa{A_w,Q^{\ell+1}; t},\pa{A_w,Q^\ell ; t} \right) &\le & \alpha^{\floor{\ell/L}} \left(C_{S_1} C_{\PP} + 2 (C_{\PP}+2E)+(2C_{S_1}E+4E)\floor{\ell/L}\right) \\
    &\le & (\alpha^{1/L})^\ell \left(A+B\ell\right) \, \, ,
\end{eqnarray*}
where $A = \alpha^{-1}\left(C_{\PP}(C_{S_1} + 2)+4E \right) $ and $B=2(\alpha L)^{-1}E(C_{S_1}+2)$. Since $\alpha^{1/L}<1$ and  $A,B$ are independent of $\ell$, we get the desired summability. Namely, for any $t$ 
\[ \sum_{\ell=1}^\infty d \left( \pa{A_w,Q^{\ell+1}; t},\pa{A_w,Q^\ell ; t} \right) \le \sum_{\ell=1}^\infty  \left(A+B\ell\right) (\alpha^{1/L})^\ell < \infty , 
\] 
and the proof follows. 
\end{proof}
\begin{remark} \label{rem:prox_2}
    In view of Theorem~\ref{thm:convergence_proximity_2ndType}, we observe a significant distinction between the two types of proximity. The first type of proximity results in Theorem~\ref{thm:main_proximity} guaranteeing weak convergence, while the second type of proximity ensures convergence for all $\PP \in \D$. Therefore, if there is no condition on $\delta(\PP)$ for $\PP \in \D$, then $S_1$ converges strongly in $\D$. In addition, the proof of Theorem~\ref{thm:convergence_proximity_2ndType} does not rely on a contractivity factor of $S_1$.
\end{remark}

\subsection{Comparison with the proximity of the first type}

Following Remark~\ref{rem:prox_2}, we further highlight the differences between the two notions of proximity by providing a side-by-side comparison.  While both concepts aim to relate the behavior of one subdivision scheme to another, they differ in their formulation, implications for convergence, and their relation to smoothness. In short, the first type of proximity captures one-step similarity between schemes and ensures weak convergence. In contrast, the second type requires asymptotic closeness across all refinement levels, leading to strong convergence and a natural pathway to smoothness results. The key distinctions are summarized in Table~\ref{tab:proximity_comparison}.

\begin{table}[ht!]
\centering
\renewcommand{\arraystretch}{1.3}
\begin{tabularx}{\textwidth}{|X|X|X|}
\hline
\textbf{Aspect} & \textbf{First Type Proximity}~\eqref{eqn:proximity} & \textbf{Second Type Proximity}~\eqref{eqn:2nd_type_proximity} \\
\hline
Definition & Compares two schemes after a \emph{single} refinement step 
& Compares one scheme to the other \emph{across all refinement levels} \\
\hline
Symmetry & Symmetric: $S_1$ close to $S_2$ $\;\Leftrightarrow\;$ $S_2$ close to $S_1$ & Not symmetric: $S_1$ may be in proximity to $S_2$ without the converse \\
\hline
Dependency of the constant & $C$ is independent of $\PP$ & $E$ may depend on $\PP$ \\
\hline
Convergence implication & Guarantees \emph{weak convergence} & Guarantees \emph{strong convergence} \\
\hline
Link to smoothness & Higher-order versions are more technical; linked indirectly to smoothness & Smoothness guaranteed by the order $m$: $m=0$ ensures convergence, $m=1$ ensures $C^1$ smoothness in $\mathbb{R}^n$, conjecture: $m$ ensures smoothness $C^m$ in $\mathbb{R}^n$. See the following Subsection~\ref{subsec:orderm} \\
\hline
\end{tabularx}
\caption{Comparison of the two types of proximity between two subdivision schemes in CMS.} \label{tab:proximity_comparison}
\end{table}


\subsection{\texorpdfstring{The role of the order $m$ of the proximity}{The role of m}} \label{subsec:orderm}

The order $m=0$ of the second type proximity leads to convergence, as is shown in the previous subsection. Here, we provide a glimpse into the question of higher-order second-type proximity. Before proceeding, we note that this aspect shows another difference between the two types of proximity regarding their higher-order generalizations. Specifically, second-order first-type proximity exists in the manifold settings~\cite[Definition 3]{wallner2006smoothness}, but its form becomes more complex with increasing orders. In the second-type proximity, the higher orders have the same form as the lower ones.

The high-order proximity of the first type is linked to smoothness, and so is the high-order proximity of the second type. Therefore, it is necessary to provide a smoothness notion when discussing the case $m\ge 1$. However, defining the smoothness of curves in a CMS can be a rather challenging task, beyond the scope of this paper, and we restrict the following discussion to curves in Euclidean spaces generated by nonlinear subdivision schemes. Specifically, this section shows how the second type proximity of order  $m=1$ leads to $C^1$ limit curves in $\R^n$. Even though the following discussion is much more restrictive in terms of the space we consider here, the result we present here, to the best of our knowledge, is new.

While in the case of a linear scheme generating $C^1$ curves in $\R^n$, the proof that the limit is $C^1$ can be reduced to the proof of convergence of a related subdivision scheme, but this is not the case for a nonlinear scheme generating $C^1$ curves in $\R^n$. For such a scheme, we prove the convergence of the sequences of divided differences of the sequences of points generated by the nonlinear scheme directly. We start by introducing new notions.

\begin{definition} [New notions and their notation]\label{def:more}

The first two items define the sequence of first-order divided differences of the sequence of points generated at the $k$-th refinement level by a subdivision scheme. 

\begin{itemize}
\item
Let $\PP\in(\R^n)^\Z$. Then the difference operator, denoted by $\Delta$, is defined by
, 
\begin{equation*}
(\Delta \PP)_{j} =  p_{j+1}-p_{j}, \quad j\in\Z, \quad \PP \in (\R^n)^\Z.    
\end{equation*}
\item
Let $\PP^{[k]}\in(\R^n)^\Z,$ be the sequence of points generated at the $k$-th refinement level by a subdivision scheme.
For simplicity of presentation, we assume that the underlying parameterization satisfies \newline $(\Delta T^k)_{j} = 2^{-k}$ for all $j\in\Z$ (This assumption holds for the primal and dual parameterizations ).Thus, $\Delta \PP^{[k]} / 2^{-k}$ represents the sequence of first-order divided differences at the $k$th refinement level. 
\item[$\bullet$]
 In $\R^n$, we use the metric defined by the Euclidean norm. This norm is denoted for simplicity by $\norm{\cdot}$. With this notation 
\[ \delta (\PP) = \max_{i \in \Z}\norm{\PP_{i+1}-\PP_{i}}  . \]
\item
 A subdivision scheme is $C^1$ in $ \D\subseteq \Omega^\Z$  if it converges and generates $C^1$ limit curves from any initial data $\PP\in \D$. 
\item
 A subdivision scheme $S$ has a \textbf{first-order}  contractivity factor $\mudelta \in (0,1)$ and a {\bf first order} \rev{contractivity level} $L_1\in\N$ in $\D\subset\R^n $ if, \rev{for every \(k\in\mathbb Z_+\)
and every level-\(k\) sequence \(\PP^k\in \D\),
\begin{equation} \label{eqn:contractivity_diff}
     \delta\!\left(
\frac{\Delta(S^{L_1}(\PP^k))}{2^{-(k+L_1)}}
\right)
\le
\mu_1
\delta\!\left(
\frac{\Delta \PP^k}{2^{-k}}
\right). 
     \end{equation} } \vspace{-1em}
\item 
If $S$ is contractive, we term the contractivity factor and the \rev{contractivity level} as \textbf{zero-order}, and denote them by $\mu_0$ and $L_0$ respectively.
\end{itemize}
\end{definition}
 
It is worth noting that a subdivision scheme $S$ with a first-order contractivity factor, which also satisfies the \rev{displacement-safe} property, is $C^1$. This follows from the convergence of the sequences of the first-order divided differences whose limit is the derivative of the limit of $S$, see~\cite{dyn1992subdivision_book}. 

In the following, by showing the convergence of the sequence of first-order divided differences of $S_1^k\PP^0$, we prove that the proximity of the second type of order $1$ leads to a limit curve that is (at least) $C^1$.
\begin{thm} \label{thm:C1_m1}
Consider $\R^n$ endowed with the metric $d(x,y) =\norm{x-y}$, where the norm is any norm in $\R^n$. Let $S_2$ be a subdivision scheme defined on $\R^n$ that satisfies~\eqref{eqn:contractivity_diff} with a contractivity factor $\mudelta$ and a \rev{contractivity level} $L_1$ in $\D \subset \big( \R^n \big)^\Z$. Let $S_1$ be a subdivision scheme defined on $\R^n$ satisfying:
    \begin{enumerate}
        \item 
        $S_1$ is defined relative to the same parameter sequences as $S_2$.
        \item 
        For all $\PP \in \D$,  $S_1(\PP) \in \D$.
        \item 
        The scheme \(S_1^{L_1}\) is in proximity of the second type of order \(1\) with \(S_2^{L_1}\), with proximity factor $0<\eta<2^{-L_1}$.
        \item 
        \(S_1\) is displacement-safe in \(\D\).
    \end{enumerate}
    Then, $S_1$ converges to a $C^1$ limit from any $\PP\in \D$.
\end{thm}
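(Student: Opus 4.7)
My plan is to mirror the Cauchy-sequence strategy of the proof of Theorem~\ref{thm:convergence_proximity_2ndType}, but applied one level higher, at the level of the first-order divided differences of $S_1$. Setting $\PP^{[k]} := S_1^k(\PP)$ and $D^{[k]}_i := 2^k(\PP^{[k]}_{i+1} - \PP^{[k]}_i)$ (as in Definition~\ref{def:more}), and letting $g_k$ be the piecewise-linear interpolant of $\{(t^k_i, D^{[k]}_i)\}_{i\in\Z}$, the goal is to show that $\{g_k\}$ is a uniform Cauchy sequence; the resulting continuous limit $g_\infty$ will then be identified with the derivative of $f_\infty := S_1^\infty(\PP)$, yielding $f_\infty \in C^1$.

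The first step is to obtain a recursion for $\delta(D^{[j]})$. Introduce the auxiliary sequence $\widetilde{\PP}^{[j+L_1]} := S_2^{L_1}(\PP^{[j]})$ with rescaled differences $\widetilde{D}^{[j+L_1]}_i := 2^{j+L_1}(\widetilde{\PP}^{[j+L_1]}_{i+1} - \widetilde{\PP}^{[j+L_1]}_i)$. The proximity of the second type of order $1$ gives $\sup_i\|\PP^{[j+L_1]}_i - \widetilde{\PP}^{[j+L_1]}_i\| \le E\eta^{j+1}$; multiplying by $2^{j+L_1}$ after taking first differences, and setting $\beta := 2\eta \in (0,1)$, yields $\sup_i\|D^{[j+L_1]}_i - \widetilde{D}^{[j+L_1]}_i\| \le C_0 \beta^{j+1}$ for an explicit $C_0$. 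The first-order contractivity of $S_2$ reads $\delta(\widetilde{D}^{[j+L_1]}) \le \mu_1 \delta(D^{[j]})$. A triangle inequality at the level of second-order differences then produces
\[
\delta(D^{[j+L_1]}) \le \mu_1 \delta(D^{[j]}) + 2C_0 \beta^{j+1}.
\]
Iterating as in the derivation of~\eqref{eqn:W_recursive_bound}, with $\alpha := \max(\mu_1, \beta^{L_1}) \in (0,1)$, gives the summable geometric estimate $\delta(D^{[k]}) \le C(k+1)\alpha^{\lfloor k/L_1 \rfloor}$.

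The second step is the Cauchy property of $\{g_k\}$, for which I would follow the triangle-inequality template of the proof of Theorem~\ref{thm:convergence_proximity_2ndType}. For $t \in [t^{k+1}_{2i}, t^{k+1}_{2i+2}) \subset [t^k_i, t^k_{i+1})$,
\[
\|g_{k+1}(t) - g_k(t)\| \le \|g_{k+1}(t) - D^{[k+1]}_{2i}\| + \|D^{[k+1]}_{2i} - D^{[k]}_i\| + \|D^{[k]}_i - g_k(t)\|.
\]
The outer two terms are bounded by $\delta(D^{[k+1]})$ and $\delta(D^{[k]})$ and are summable by the first step. The middle term is the $C^1$-level analogue of the displacement-safe estimate used in the proof of Theorem~\ref{thm:convergence_proximity_2ndType}: I would interpose $\widetilde{D}^{[k+1]}_{2i}$ obtained by applying $S_2^{L_1}$ to $\PP^{[k-L_1+1]}$, control $\|D^{[k+1]}_{2i} - \widetilde{D}^{[k+1]}_{2i}\| = O(\beta^k)$ by the proximity, and bound $\|\widetilde{D}^{[k+1]}_{2i} - D^{[k]}_i\|$ using the locality of $S_2^{L_1}$ together with a short telescoping in the sequence $D^{[k-L_1+1]}$, whose $\delta$ is already geometrically small. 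Summability of $\|g_{k+1} - g_k\|_\infty$ then yields a continuous uniform limit $g_\infty$, and the identification $g_\infty = f_\infty'$ follows from the standard fact that $f_k$ converges uniformly to $f_\infty$ while its piecewise-constant derivative (equal to $D^{[k]}_i$ on $(t^k_i, t^k_{i+1})$) differs from $g_k$ by at most $\delta(D^{[k]}) \to 0$.

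The main obstacle I anticipate is precisely the last bound, $\|\widetilde{D}^{[k+1]}_{2i} - D^{[k]}_i\|$, which is a between-levels displacement-safe estimate for divided differences under $S_2$. Since the hypotheses postulate only first-order contractivity of $S_2$ (and not $C^1$ smoothness of $S_2$'s limit or convergence of a derived scheme), this estimate must be extracted from locality of $S_2$ and the smallness of $\delta(D^{[k-L_1+1]})$, playing at the $C^1$ level the same role that the D-S property plays in the proof of Theorem~\ref{thm:convergence_proximity_2ndType}. Handling this step carefully is the crux of the argument; once in hand, the remainder of the proof proceeds as in the template above.
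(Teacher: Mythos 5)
Your proposal follows essentially the same route as the paper: use the order-$1$ proximity together with the first-order contractivity of $S_2$ to derive the recursion $\delta(D^{[j+L_1]})\le \mudelta\,\delta(D^{[j]})+O\big((2\eta)^{j}\big)$, iterate it to a summable bound with $\alpha=\max\{2\eta,\mudelta\}$, and then rerun the Cauchy argument of Theorem~\ref{thm:convergence_proximity_2ndType} at the level of the first-order divided differences before identifying the limit with the derivative. The one step you single out as the crux --- the between-levels displacement estimate $\|D^{[k+1]}_{2j}-D^{[k]}_{j}\|$, playing the role of a displacement-safe property for divided differences --- is exactly the step the paper also leaves implicit (it simply asserts that ``the rest of the proof of Theorem~\ref{thm:convergence_proximity_2ndType} is valid'' for the divided-difference sequences), so your write-up is, if anything, more candid about where the remaining work lies.
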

\begin{proof}
    The proof follows similar lines to those of the proof of Theorem~\ref{thm:convergence_proximity_2ndType}. First, for \(\ell \ge L_1\), the \rev{triangle inequality gives},
\begin{multline} \label{eqn:proximity_diff}
    \norm{\left(\Delta (S_1^\ell (\PP)) / 2^{-\ell}  \right)_j - \left(\Delta ( S_1^\ell ( \PP ))/ 2^{-\ell}  \right)_{j+1} }
   \le   \\
  \norm{\left(\Delta (S_1^{L_1} S_1^{\ell-L_1} (\PP)) / 2^{-\ell}  \right)_j- \left(\Delta (S_2^{L_1} S_1^{\ell-L_1} (\PP)) / 2^{-\ell}  \right)_j } \nonumber  \\
  \hspace{80pt}  +   \norm{ \left(\Delta (S_2^{L_1} S_1^{\ell-{L_1}} (\PP)) / 2^{-\ell}  \right)_j - \left(\Delta (S_2^{L_1} S_1^{\ell-{L_1}} (\PP)) / 2^{-\ell}  \right)_{j+1} } \nonumber \\
    +  \norm{\left(\Delta (S_2^{L_1} S_1^{\ell-{L_1}} (\PP)) / 2^{-\ell}  \right)_{j+1} - \left(\Delta (S_1^{L_1} S_1^{\ell-{L_1}} (\PP)) / 2^{-\ell}  \right)_{j+1} } . 
\end{multline} 
\rev{Let
\[
    \ell=mL_1+r,\qquad
    m=\left\lfloor\frac{\ell}{L_1}\right\rfloor,\qquad
    r\in\{0,\ldots,L_1-1\}.
\]
For each such \(r\), set $\PP^r=S_1^r(\PP)$. Since \(S_1(\D)\subset \D\), we have \(\PP^r\in \D\). Applying the second-type proximity estimate for \(S_1^{L_1}\) and \(S_2^{L_1}\) to the finitely many initial sequences \(\PP^r\), \(r=0,\ldots,L_1-1\), there are constants \(E_r>0\) such that
\[
\sup_{j\in\mathbb Z}
\left\|
S_1^{L_1}\bigl((S_1^{L_1})^q(\PP^r)\bigr)_j
-
S_2^{L_1}\bigl((S_1^{L_1})^q(\PP^r)\bigr)_j
\right\|
\le
E_r\eta^{q+1},
\qquad q\in\mathbb Z_+ .
\]
We set $E=\max_{0\le r<L_1}E_r$. Taking \(q=m-1\), we obtain, for \(m\ge 1\),
\[
\sup_{j\in\mathbb Z}
\left\|
S_1^\ell(\PP)_j
-
S_2^{L_1}\bigl(S_1^{\ell-L_1}(\PP)\bigr)_j
\right\|
\le
E\eta^m .
\]
Consequently,
\[
\begin{aligned}
\left\|
\left(
\frac{\Delta S_1^\ell(P)}{2^{-\ell}}
\right)_j
-
\left(
\frac{\Delta S_2^{L_1}(S_1^{\ell-L_1}(P))}{2^{-\ell}}
\right)_j
\right\|  
\le
2^{\ell+1}E\eta^m
=
2^{r+1}E(2^{L_1}\eta)^m
\le
2^{L_1+1}E\beta^m,
\end{aligned}
\]
where $\beta=2^{L_1}\eta<1$. The same estimate holds with \(j\) replaced by \(j+1\). Therefore, using
the first-order contractivity of \(S_2\), we obtain
\[
\delta\left(
\frac{\Delta S_1^\ell(P)}{2^{-\ell}}
\right)
\le
\mu_1
\delta\left(
\frac{\Delta S_1^{\ell-L_1}(P)}{2^{-(\ell-L_1)}}
\right)
+
2^{L_1+2}E\beta^m .
\]
Iterating this estimate over the fixed residue class \(r\) gives
\[
\delta\left(
\frac{\Delta S_1^{mL_1+r}(P)}{2^{-(mL_1+r)}}
\right)
\le
\mu_1^m
\delta\left(
\frac{\Delta S_1^r(P)}{2^{-r}}
\right)
+
2^{L_1+2}E
\sum_{q=1}^{m}
\mu_1^{m-q}\beta^q .
\] }
Let $\alpha=\max\{\mu_1,\beta\}<1$. Since \(\mu_1^{m-q}\beta^q\le \alpha^m\), we conclude that
\[
\delta\left(
\frac{\Delta S_1^\ell(P)}{2^{-\ell}}
\right)
\le
\alpha^{\lfloor \ell/L_1\rfloor}
\left(
\delta\left(
\frac{\Delta S_1^r(P)}{2^{-r}}
\right)
+
2^{L_1+2}E
\left\lfloor\frac{\ell}{L_1}\right\rfloor
\right),
\]
where \(r=\ell-L_1\lfloor \ell/L_1\rfloor\). Thus, the rest of the proof of Theorem~\ref{thm:convergence_proximity_2ndType} is valid for 
\[ Q^\ell=\{(t_j^\ell,(\Delta (S_1^\ell (\PP)) / 2^{-\ell})_j\}_{j\in \Z}, \]
namely, it is a Cauchy sequence. Therefore, there exists a continuous limit to the sequence of sequences of divided differences of first order. Moreover, by the standard implication recalled above, the first-order contractivity of \(S_2\) implies its zero-order contractivity. Hence \(S_2\) is contractive in the sense required by Theorem~\ref{thm:convergence_proximity_2ndType}. Since proximity of order \(1\) implies proximity of order \(0\), and since \(S_1\) is displacement-safe in \(\D\), Theorem~\ref{thm:convergence_proximity_2ndType} yields the convergence of \(S_1\). 
\end{proof}

As a first conclusion from the proof of Theorem~\ref{thm:C1_m1}, we see that the convergence is guaranteed since the proximity for \( m = 0 \) follows from the case \( m = 1 \). Moreover, the convergence of the divided differences to the derivative implies that the limit curve is differentiable.  

To conclude, we conjecture the general order case:
\begin{conj}
Consider $\R^n$ endowed with the metric $d(x,y) =\norm{x-y}$, where the norm is any norm in $\R^n$. Let $S_2$ be a subdivision scheme defined on $\R^n $, \rev{satisfying, for \(q=0,\ldots,m\), some \(\mu_q\in(0,1)\) and \(L_q\in\mathbb N\), and for every \(k\in\mathbb Z_+\) and every level-\(k\) sequence \(\PP^k\in \D\),
\begin{equation}
\delta\!\left(
\frac{\Delta^q(S_2^{L_q}(\PP^k))}{2^{-q(k+L_q)}}
\right)
\le
\mu_q
\delta\!\left(
\frac{\Delta^q \PP^k}{2^{-qk}}
\right).
\end{equation}}
    Let $S_1$ be a subdivision scheme defined on $\R^n$ and satisfying there:
    \begin{enumerate}
        \item 
        $S_1$ is defined relative to the same parameter sequences as $S_2$.
        \item 
        The scheme $S_1^{L_m}$ is in \textbf{proximity of the second type of order} $\mathbf{m}$ with $S_2^{L_m}$ in $\R^n$.
    \end{enumerate}
    Then, $\mathbf{S_1}$ \textbf{converges to a} $\mathbf{C^m}$ limit.
\end{conj}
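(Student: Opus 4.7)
The plan is to prove the conjecture by induction on the order $n=0,1,\ldots,m$: at each step I will show that the sequence of normalized $n$-th order divided differences $\{\Delta^n(S_1^\ell \PP)/2^{-n\ell}\}_{\ell\in\N}$ converges uniformly, via its piecewise linear interpolants in $\R^n$, to a continuous curve. By the classical correspondence between divided differences and derivatives in subdivision theory (see~\cite{dyn1992subdivision_book}), this continuous limit is then identified with the $n$-th derivative of the $S_1$-limit curve, so completing the induction up to $n=m$ establishes $C^m$ smoothness. The base case $n=0$ is the convergence of $S_1$ itself, and follows immediately from Theorem~\ref{thm:convergence_proximity_2ndType}: proximity of order $m$ is a fortiori proximity of order $0$ (the constraint $\eta<(1/2)^m$ is strictly stronger than $\eta<1$), and the $n=0$ case of the hypothesis provides contractivity of $S_2$ with parameters $\mu_0,L_0$.

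For the inductive step, fix $n$ with $1\le n\le m$ and mirror the proof of Theorem~\ref{thm:C1_m1} with $\Delta$ replaced by $\Delta^n$ and the normalization $2^{-\ell}$ replaced by $2^{-n\ell}$. The starting decomposition uses the triangle inequality with $S_2^{L_m}$ as a pivot between two applications of $S_1^{L_m}$, acting on $\PP'=S_1^{\ell-L_m}\PP$. The middle term is controlled by the contractivity of $S_2^{L_m}$ on normalized $n$-th differences, yielding a factor $\tilde\mu_n\in(0,1)$ derived from $\mu_n$ and $L_n$ (roughly $\tilde\mu_n\approx\mu_n^{\floor{L_m/L_n}}$ with bounded corrections from the remainder). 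The two outer terms compare $\Delta^n(S_1^{L_m}\PP')$ with $\Delta^n(S_2^{L_m}\PP')$; using the elementary bound $\|\Delta^n h\|_\infty\le 2^n\|h\|_\infty$ (from $\Delta^n h_i=\sum_{k=0}^n(-1)^{n-k}\binom{n}{k}h_{i+k}$) together with the proximity hypothesis, and then dividing by $2^{-n\ell}$, produces a geometrically decaying bound whose per-refinement ratio is controlled by $2^n\eta$ (up to an overall $L_m$-dependent constant). The crucial point is that the hypothesis $\eta<(1/2)^m$, being sharper than $\eta<(1/2)^n$ for every $n\le m$, is precisely what makes the outer-term contribution summable at every inductive stage.

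Unrolling the one-step recursion exactly as in Theorems~\ref{thm:convergence_proximity_2ndType} and~\ref{thm:C1_m1} then yields
\[
\delta\!\left(\Delta^n(S_1^\ell \PP)/2^{-n\ell}\right)\le \alpha^{\floor{\ell/L_m}}\bigl(C^{(n)}_\PP+D_n\floor{\ell/L_m}\bigr),
\]
with $\alpha<1$ and $C^{(n)}_\PP=\max_{0\le r<L_m}\delta(\Delta^n(S_1^r\PP)/2^{-nr})$ finite by the locality of $S_1$ (Lemma~\ref{lemma:locality}) and the inductive hypothesis. A standard piecewise linear interpolant argument in the metric~\eqref{eqn:metric-functions} then shows that the sequence of $n$-th order divided differences is Cauchy and admits a continuous uniform limit, which by~\cite{dyn1992subdivision_book} coincides with the $n$-th derivative of the $S_1$-limit curve.

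The main obstacle I expect is the mismatch between the single proximity scale $L_m$ and the family of contractivity scales $L_0,L_1,\ldots,L_m$: the proximity only relates $S_1^{L_m}$ and $S_2^{L_m}$, whereas each induction level $n$ naturally wants to exploit contractivity at its own scale $L_n$. The clean resolution is to anchor the entire argument at the coarser scale $L_m$, absorbing all $L_n$-versus-$L_m$ discrepancies into the effective factors $\tilde\mu_n$ and into bounded correction constants. A secondary subtlety is the careful bookkeeping of exponents: one must track how the proximity exponent scales with the refinement level $\ell$ and ensure that the magnification by $2^{n\ell}$ stemming from the $n$-th divided-difference normalization is strictly dominated by the decay $\eta^{\cdot}$ at every $n\le m$, which is the sole role played by the quantitative condition $\eta<(1/2)^m$.
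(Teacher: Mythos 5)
The statement you are proving is explicitly left open by the paper: it is stated as a conjecture, and the authors write ``We leave the study of this conjecture to future work.'' There is therefore no proof in the paper to compare against. Your plan --- induct on the order $n$ of the divided differences, reproducing the argument of Theorem~\ref{thm:C1_m1} with $\Delta$ replaced by $\Delta^n$ and $2^{-\ell}$ by $2^{-n\ell}$, and using $\|\Delta^n h\|_\infty\le 2^n\|h\|_\infty$ together with $\eta<(1/2)^m$ to dominate the $2^{n\ell}$ magnification --- is the natural route, and the base case $n=0$ and the exponent bookkeeping for the outer (proximity) terms are consistent with how the paper handles $m=1$. But as written this is a sketch, not a proof, and two of its steps do not currently go through.

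First, the scale mismatch between $L_m$ and $L_n$ is a genuine gap, not a bookkeeping nuisance. You propose $\tilde\mu_n\approx\mu_n^{\floor{L_m/L_n}}$ ``with bounded corrections from the remainder,'' but the hypotheses give contractivity of $\Delta^n$-differences only for $S_2^{L_n}$; to control $\delta(\Delta^n S_2^{L_m}\PP'/2^{-n\ell})$ you must write $S_2^{L_m}=S_2^{L_n\floor{L_m/L_n}}S_2^{r}$ and bound $\delta(\Delta^n S_2^{r}\PP')$ in terms of $\delta(\Delta^n\PP')$. For a \emph{nonlinear} $S_2$ no such boundedness follows from the stated hypotheses (and nothing forces $L_m\ge L_n$ in the first place, in which case $\floor{L_m/L_n}=0$ and you get no contraction at all). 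You would need either to assume the $L_n$ divide $L_m$, or to add a boundedness hypothesis on the action of $S_2$ on $n$-th differences; note also that upgrading the proximity from scale $L_m$ to a common multiple of the $L_n$ is not automatic, since comparing $S_1^{2L_m}\PP$ with $S_2^{2L_m}\PP$ requires a stability property of $S_2$ that is not assumed. Second, the Cauchy argument for the piecewise linear interpolants of $Q^\ell=\{(t_j^\ell,(\Delta^n S_1^\ell\PP/2^{-n\ell})_j)\}$ reuses the middle term of \eqref{eqn:W_proof_main}, i.e.\ a displacement-safe estimate $\|(\Delta^n S_1^{\ell+1}\PP/2^{-n(\ell+1)})_{2j}-(\Delta^n S_1^{\ell}\PP/2^{-n\ell})_{j}\|\le C\,\delta(\Delta^n S_1^\ell\PP/2^{-n\ell})$; this does not follow from the D-S property of $S_1$ on point sequences and is neither assumed nor derived (the paper's own $m=1$ proof elides the same point, so you should at least isolate it as an explicit hypothesis or lemma). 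The identification of the uniform limit of the normalized $n$-th divided differences with the $n$-th derivative is, by contrast, a fixable detail: the exact identity $\Delta(\Delta^{n-1}\PP^\ell/2^{-(n-1)\ell})/2^{-\ell}=\Delta^n\PP^\ell/2^{-n\ell}$ lets you apply the standard real-analysis lemma level by level, but you should state and prove that lemma rather than cite the linear theory, since $S_1$ is nonlinear.
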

We leave the study of this conjecture to future work.

\subsection {Application of the second type proximity of order \texorpdfstring{$m=0$ and $m=1$}{m=0 and m=1} } \label{subsec:application_second_prox}

In Section~\ref{subsec:app_convergence}, we demonstrated the convergence of elementary schemes with intrinsic averages. However, these schemes do not produce $C^1$ limits; for instance, the limit of the linear version of the first divided differences exhibits a jump discontinuity at the input data points. In this section, we show that by applying an additional round of averaging in each refinement, we can guarantee a $C^1$ limit using the second type of proximity.

We consider $\Omega$ to be a Riemannian manifold embedded in $\R^n$. Therefore, we refer to the $C^1$ smoothness of any curve in $\Omega$ with respect to the embedded Euclidean space. Here, the average $A_w$ is the geodesic average, which is intrinsic.

In~\cite{dyn2017global}, we introduce an extension of the linear spline schemes (see Section~\ref{sec:perlimin}), which is a method for evaluating spline subdivision schemes through a series of averaging rounds. Here, we focus on a specific type of refinement, where an additional round of uniform averaging is incorporated into the elementary scheme. Namely, instead of the elementary scheme~\eqref{eqn:elementary_scheme}, the corresponding refinement rules are,
\begin{equation} \label{eqn:additional_round}
 S(\PP)_{2j}=A_{\omega}\left(p_j,A_\frac{1}{2}(p_j,p_{j+1})\right) \quad S(\PP)_{2j+1}=A_{\omega}\left(A_\frac{1}{2}(p_j,p_{j+1}),p_{j+1} \right) ,\quad j\in\Z.
\end{equation}
with a fixed $\omega \in (0,1)$. Note that the spline subdivision scheme in $\R^n$~\eqref{eqn:cc} uses $\omega=\frac{1}{2}$, which leads to the well-known Chaikin’s corner-cutting algorithm~\cite{chaikin1974algorithm}.

\rev{The main result of this section demonstrates how the second type of proximity can be employed to establish strong $C^1$ smoothness. It is worth noting that this does not yield a fundamentally new $C^1$ result. By \cite{wallner2005convergence}, $C^1$ smoothness can already be obtained in a weak sense. Together with the strong convergence from \cite{dyn2017global}, the two results guarantee that the scheme is $C^1$. In other words, the contribution of this example, aside from illustrating the applicability of the case $m=1$, lies not in providing a new smoothness result, but in simplifying the proof: it reduces a two-step argument (convergence plus proximity of the first type) to a single proximity criterion, of the second type. }

\begin{thm} \label{thm:C1_manifold}
    Let $\Omega \subset \R^n$ be a complete Riemannian manifold and let $\D \subseteq \Omega^\Z$. Assume that for $\PP\in\D$, any pair of consecutive points has a unique geodesic curve that connects them. Then, the scheme~\eqref{eqn:additional_round} with the geodesic average, converges to a $C^1$ limit.
\end{thm}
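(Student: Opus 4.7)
The plan is to invoke Theorem~\ref{thm:C1_m1} with $S_1$ equal to the geodesic scheme in~\eqref{eqn:additional_round} and $S_2$ its linear analog in $\R^n$ obtained by replacing every geodesic average by the arithmetic average. This yields the explicit linear rules
\[ S_2(\PP)_{2i} = \bigl(1-\tfrac{\omega}{2}\bigr)\,p_i + \tfrac{\omega}{2}\,p_{i+1}, \qquad S_2(\PP)_{2i+1} = \tfrac{1-\omega}{2}\,p_i + \tfrac{1+\omega}{2}\,p_{i+1} . \]
A direct computation on the normalized first divided differences $D_j^k = 2^k(p_{j+1}^k - p_j^k)$ gives $D_{2i}^{k+1} = D_i^k$ and $D_{2i+1}^{k+1} = (1-\omega)D_i^k + \omega D_{i+1}^k$, so consecutive entries of $D^{k+1}$ differ by $\omega\,(D_{i+1}^k - D_i^k)$ or $(1-\omega)\,(D_{i+1}^k - D_i^k)$. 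Hence $S_2$ has first-order contractivity factor $\mu_1 = \max(\omega, 1-\omega) < 1$ and first-order contractivity number $L_1 = 1$ in the sense of Definition~\ref{def:more}. Both schemes share the same (dual) parameterization by construction, and $S_1(\D) \subseteq \D$ because refined points lie in a strictly smaller neighborhood where the uniqueness of geodesics between consecutive points is retained.

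The key quantitative input is a second-order comparison between geodesic and chord averages on the Riemannian submanifold $\Omega \subset \R^n$. A Taylor expansion of the exponential map in the ambient space, combined with the identification of the geodesic acceleration in $\R^n$ with the second fundamental form of $\Omega$, produces a constant $K_\Omega > 0$ such that, for any $x, y \in \Omega$ joined by a unique minimizing geodesic of intrinsic length $r$ below a curvature-controlled bound,
\[ \bigl\| A_\omega(x,y) - \bigl((1-\omega)x + \omega y\bigr) \bigr\|_{\R^n} \le K_\Omega\, \omega(1-\omega)\, r^2, \qquad \omega \in [0,1] . \]
Applying this to both the inner midpoint $A_{1/2}$ and the outer $A_\omega$ in~\eqref{eqn:additional_round}, together with the $\R^n$-Lipschitz dependence of the linear averages on their inputs and the triangle inequality, I obtain a uniform one-step bound
\[ \sup_{i \in \Z} \bigl\| S_1(\PP)_i - S_2(\PP)_i \bigr\|_{\R^n} \le C\, \delta(\PP)^2, \]
with $C$ depending only on $K_\Omega$ and $\omega$, where $\delta$ is measured in the intrinsic Riemannian metric.

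Iterating this one-step bound on $\PP^k = S_1^k(\PP^0)$, I use that $S_1$ is contractive with factor $\tfrac{1}{2}$ in the Riemannian metric---which holds because, by the intrinsic property of the geodesic average, $S_1(\PP)_{2i}$ and $S_1(\PP)_{2i+1}$ lie on the geodesic from $p_i$ to $p_{i+1}$ at fractions $\omega/2$ and $(1+\omega)/2$---to conclude $\delta(\PP^k) \le (1/2)^k \delta(\PP^0)$, and therefore
\[ \sup_{i \in \Z} \bigl\| S_1(\PP^k)_i - S_2(\PP^k)_i \bigr\|_{\R^n} \le C\, \delta(\PP^0)^2\, \bigl(\tfrac{1}{4}\bigr)^k = E\, \eta^{k+1}, \]
with $\eta = \tfrac{1}{4} < \tfrac{1}{2}$ and $E = 4 C\,\delta(\PP^0)^2$. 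Since $L_1 = 1$, this is precisely the proximity of the second type of order $m=1$ between $S_1^{L_1}$ and $S_2^{L_1}$ required by Theorem~\ref{thm:C1_m1}, and the theorem then delivers a $C^1$ limit curve. The main technical hurdle is the uniformity of the geometric estimate: the constant $K_\Omega$ must be controllable over the region in which all refined points live. Since contractivity of $S_1$ confines the entire orbit $\{\PP^k\}_{k\ge 0}$ to a bounded neighborhood of $\PP^0$, local curvature bounds suffice, but the resulting dependence on $\PP^0$ must be tracked carefully because $\D$ is not assumed to be globally bounded in the statement.
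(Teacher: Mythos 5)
Your proposal follows essentially the same route as the paper: invoke Theorem~\ref{thm:C1_m1} with $S_2$ the linear corner-cutting counterpart, use the quadratic geodesic-versus-chord estimate (the paper's Lemma~\ref{lemma:dist}) together with the contractivity $\delta(\PP^k)\le (1/2)^k\delta(\PP^0)$ of the geodesic scheme to obtain second-type proximity of order $1$ with $\eta=\tfrac14$ and $E$ proportional to $\delta(\PP^0)^2$. Your explicit verification of the first-order contractivity of $S_2$ (factor $\max(\omega,1-\omega)$, $L_1=1$) and your handling of the inner-average mismatch via the triangle inequality are details the paper leaves as ``straightforward,'' so the argument is correct and matches the paper's proof.
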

The above result is obtained using the proximity of the second type of order 1. Recall that the conditions of $m=1$ imply both convergence and $C^1$ smoothness, since it includes the case $m=0$. The proof is based on the following two observations stated in the next two lemmas. 

In this example, the norm is the Euclidean norm and the average $A_{\omega}(p_0,p_1)$ is the Riemannian intrinsic average. Namely, $A_{\omega}(p_0,p_1)$ is the point along the unique geodesic connecting $p_0$ and $p_1$ that lies in a distance $\omega \, d(p_0,p_1)$ from $p_0$. One can verify that due to the properties of the Riemannian geodesic, this is indeed an intrinsic average and it satisfies the metric property~\eqref{defd=metric property}.
\begin{lemma} \label{lemma:dist}
There exists a constant $C$, independent of the points $p_0,p_1$ such that
    \[ \norm{A_{\omega}(p_0,p_1) - \left((1-\omega)p_0+ \omega p_1\right)}  \le C (\omega+\omega^2) \norm{p_0-p_1}^2  , \quad \omega \in (0,1) . \]
\end{lemma}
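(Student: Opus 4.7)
The plan is to exploit the Riemannian submanifold structure of $\Omega \subset \R^n$. I would parametrize the unique geodesic $\gamma \colon [0,1]\to \Omega$ joining $p_0$ and $p_1$ with constant speed $L=d_\Omega(p_0,p_1)$, so that $A_\omega(p_0,p_1)=\gamma(\omega)$. Viewing $\gamma$ as a curve in the ambient Euclidean space, its extrinsic acceleration $\ddot\gamma(s)$ is normal to $\Omega$ and equals the second fundamental form evaluated on $\dot\gamma$, which gives a pointwise bound $\|\ddot\gamma(s)\|\le \kappa L^2$, where $\kappa$ is a local bound on the second fundamental form of $\Omega$.

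Next I would apply Taylor's theorem with integral remainder to $\gamma$ at the parameter values $\omega$ and $1$, obtaining
\begin{equation*}
\gamma(\omega)=p_0+\omega\dot\gamma(0)+\int_0^\omega(\omega-s)\ddot\gamma(s)\,ds,\qquad p_1=p_0+\dot\gamma(0)+\int_0^1(1-s)\ddot\gamma(s)\,ds.
\end{equation*}
Forming the combination $\gamma(\omega)-((1-\omega)p_0+\omega p_1)=\gamma(\omega)-p_0-\omega(p_1-p_0)$ eliminates the term involving $\dot\gamma(0)$ and leaves the difference of two integral remainders. Taking norms and using the uniform bound $\|\ddot\gamma\|\le \kappa L^2$ yields
\begin{equation*}
\bigl\|\gamma(\omega)-((1-\omega)p_0+\omega p_1)\bigr\|\le \kappa L^2\left(\int_0^\omega(\omega-s)\,ds+\omega\int_0^1(1-s)\,ds\right)=\frac{\kappa L^2}{2}(\omega^2+\omega).
\end{equation*}

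The final step converts the intrinsic length $L$ to the Euclidean norm $\|p_0-p_1\|$. On a smooth submanifold, intrinsic and extrinsic distances are locally equivalent, with $L=\|p_0-p_1\|+O(L^3)$ for nearby points, hence $L\le c\|p_0-p_1\|$ for a constant $c$ depending on the local geometry. Substituting gives the desired inequality with $C=\kappa c^{2}/2$.

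The main obstacle is ensuring that the constant $C$ is genuinely independent of $p_0, p_1$ as claimed. Both the bound $\kappa$ on the second fundamental form and the equivalence constant $c$ are inherently local, so a single $C$ requires restricting attention to a set on which these can be taken uniform. Since the hypothesis already assumes that each consecutive pair in $\D$ admits a unique connecting geodesic, and since the scheme analysis is applied to sequences with bounded $\delta(\PP)$, the relevant pairs remain in a totally normal neighborhood where the needed constants are uniform; this is what supplies the global constant required by the statement.
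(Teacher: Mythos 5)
Your argument is correct and is essentially the proof behind the result the paper cites: the paper gives no proof of this lemma, only a pointer to \cite{wallner2005convergence}, and the standard argument there is exactly your Taylor expansion of the constant-speed geodesic, with the extrinsic acceleration $\ddot\gamma=\mathrm{II}(\dot\gamma,\dot\gamma)$ bounded via the second fundamental form and the cross terms in $\dot\gamma(0)$ cancelling in the combination $\gamma(\omega)-(1-\omega)p_0-\omega p_1$. Your closing caveat is also the right one to flag: as stated, the constant $C$ can only be uniform on a region of bounded geometry, and this is implicitly supplied in the paper's application because the refined data remain in a fixed neighborhood where both the second fundamental form and the intrinsic-to-extrinsic distance ratio are bounded.
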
 
For proof see~\cite[Lemma 4]{wallner2005convergence}. The next lemma directly proves Theorem~\ref{thm:C1_manifold}.
\begin{lemma}
    The scheme~\eqref{eqn:additional_round} satisfies the conditions of Theorem~\ref{thm:C1_m1}, where the proximity of order $m=1$ is to its linear counterpart in $\R^n$, that is~\eqref{eqn:additional_round} where $A_{\omega}$ is the arithmetic average.
\end{lemma}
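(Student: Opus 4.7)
The plan is to verify the three hypotheses of Theorem~\ref{thm:C1_m1} with $S_2$ being the arithmetic-mean counterpart of $S_1$, both built from the corner-cutting template~\eqref{eqn:additional_round}. The parameterization hypothesis is immediate, since $S_1$ and $S_2$ share the same template and its associated dual parameterization. The invariance $S_1(\PP)\in\D$ holds because each new point is an intrinsic geodesic average of two elements that already admit a unique connecting geodesic, so consecutive pairs of $S_1(\PP)$ lie on sub-arcs of such geodesics and the uniqueness-of-geodesic condition persists, provided $\D$ is chosen so as to be stable under refinement (say, sequences lying in a geodesically convex region). For the first-order contractivity of $S_2$ in $(\R^n)^\Z$, I would compute the scheme induced on the divided differences $E^k_j:=2^k\,\Delta\PP^k_j$. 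A direct calculation from~\eqref{eqn:additional_round} gives
\begin{equation*}
    E^{k+1}_{2j}= E^k_j,\qquad E^{k+1}_{2j+1} = (1-\omega)\,E^k_j + \omega\,E^k_{j+1},
\end{equation*}
whence $\delta(E^{k+1})\le \max(\omega,1-\omega)\,\delta(E^k)$, so $S_2$ satisfies~\eqref{eqn:contractivity_diff} with $\mu_1=\max(\omega,1-\omega)\in(0,1)$ and $L_1=1$.

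The central step is the proximity of the second type of order $m=1$. Since~\eqref{eqn:additional_round} composes an outer $A_\omega$ with an inner $A_{1/2}$, I would apply Lemma~\ref{lemma:dist} twice: first to replace the inner Riemannian midpoint by the arithmetic midpoint, at a cost of at most $C_0\,\norm{p_j-p_{j+1}}^2\le C_0\,\delta(\PP)^2$; and then, using the Lipschitz dependence of the geodesic average on its endpoints together with another application of Lemma~\ref{lemma:dist} to the outer average, to replace the outer geodesic average by the outer arithmetic one. The two errors combine to give the one-step quadratic proximity
\begin{equation*}
    \sup_{i\in\Z}\norm{S_1(\PP)_i-S_2(\PP)_i} \le C\,\delta(\PP)^2,
\end{equation*}
with a constant $C$ depending on $\omega$ and on the local geometry of $\Omega$ but not on $\PP$.

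To upgrade this one-step bound to the geometric decay required by~\eqref{eqn:2nd_type_proximity} with $\eta<\tfrac12$, I would track $\delta(\PP^j)$ along the nonlinear orbit $\PP^j=S_1^j(\PP)$. Using $\delta(S_2(\PP))\le\tfrac12\,\delta(\PP)$ (from the contractivity computation above) together with the triangle inequality and the just-proved one-step proximity,
\begin{equation*}
    \delta(\PP^{j+1})\le \bigl(\tfrac12+2C\,\delta(\PP^j)\bigr)\,\delta(\PP^j).
\end{equation*}
Fix any $\alpha\in\bigl(\tfrac12,\tfrac{1}{\sqrt{2}}\bigr)$. Once $\delta(\PP^j)<(\alpha-\tfrac12)/(2C)$, this recursion becomes self-perpetuating and $\delta(\PP^j)\le \alpha^{j-J}\,\delta(\PP^J)$ for all $j\ge J$, where $J=J(\PP)$ is the first level at which the smallness is achieved. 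Substituting yields
\begin{equation*}
    \sup_{i\in\Z}\norm{S_1(\PP^j)_i-S_2(\PP^j)_i}\le C\,\delta(\PP^j)^2 \le E\,\eta^{j+1},\quad \eta=\alpha^2<\tfrac12,
\end{equation*}
with $E=E(\PP)$ absorbing the finitely many initial levels as permitted by the alternative formulation discussed after Definition~\ref{def:2nd_type_prox_def}; Theorem~\ref{thm:C1_m1} then delivers the $C^1$ limit. The main obstacle I foresee is making Lemma~\ref{lemma:dist} uniform along the whole orbit on a curved manifold: its constant depends on the local geometry, so one must ensure that every consecutive pair generated by $S_1$ remains in a region where the exponential map is a diffeomorphism with uniformly controlled second-order deviation from its linearization---which is precisely what the uniqueness-of-geodesic hypothesis on $\D$ is there to enforce.
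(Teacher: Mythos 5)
There is a genuine gap in your derivation of the geometric decay needed for~\eqref{eqn:2nd_type_proximity}. You control $\delta(\PP^j)$ along the nonlinear orbit via the perturbed recursion $\delta(\PP^{j+1})\le\bigl(\tfrac12+2C\,\delta(\PP^j)\bigr)\delta(\PP^j)$ and then assert that ``once'' $\delta(\PP^j)<(\alpha-\tfrac12)/(2C)$ the decay is self-perpetuating. But nothing in your argument shows this threshold is ever reached: for initial data with $2C\,\delta(\PP^0)\ge\tfrac12$ the multiplicative factor exceeds $1$ and the recursion permits $\delta(\PP^j)$ to grow, so $J(\PP)$ may not exist. (The remark after Definition~\ref{def:2nd_type_prox_def} also requires the cutoff level $J$ to be independent of $\PP$, which your $J(\PP)$ is not.) The paper avoids this entirely by observing that the \emph{nonlinear} scheme~\eqref{eqn:additional_round} is itself exactly contractive with factor $\tfrac12$ and contractivity number $1$ in the geodesic metric: all new points lie on the geodesics joining consecutive old points, and by the metric property $S(\PP)_{2j}=A_{\omega/2}(p_j,p_{j+1})$, $S(\PP)_{2j+1}=A_{(1+\omega)/2}(p_j,p_{j+1})$, whence $\delta(S^{\ell}(\PP))\le 2^{-\ell}\delta(\PP)$ unconditionally (a special case of Theorem~3.1 of the global-analysis reference). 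Feeding this into the one-step quadratic bound gives $\sup_i\norm{S_1(\PP^{\ell})_i-S_2(\PP^{\ell})_i}\le 2C\,\delta(\PP^0)\cdot(\tfrac14)^{\ell+1}$, i.e.\ $\eta=\tfrac14<\tfrac12$ and $E=2C\,\delta(\PP^0)$, with no smallness assumption on the data. This unconditional contractivity is the missing ingredient; without it your argument delivers at best a weak (density-dependent) conclusion, defeating the purpose of second-type proximity.

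A secondary, fixable issue: your two-stage application of Lemma~\ref{lemma:dist} first replaces the inner Riemannian midpoint by the arithmetic midpoint and then treats the outer geodesic average of the result, but the arithmetic midpoint generally lies off the manifold, so the outer geodesic average of it is not defined. Either reverse the order (apply Lemma~\ref{lemma:dist} to the outer average $A_\omega(p_j,m)$ with $m$ the geodesic midpoint, then replace $m$ by the arithmetic midpoint inside the \emph{linear} combination at cost $\omega\norm{m-\tilde m}$), or do as the paper does and use that the composed geodesic average collapses to the single average $A_{\omega/2}(p_j,p_{j+1})$, so one application of Lemma~\ref{lemma:dist} suffices. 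Your explicit verification of the first-order contractivity of the linear scheme (the induced scheme on divided differences with factor $\max(\omega,1-\omega)$) is correct and is more detailed than the paper, which dismisses it as straightforward.
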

\begin{proof}
   Denote by $S$ the scheme of Theorem~\ref{thm:C1_manifold} and by $S_{\text{lin}}$ its linear counterpart in $\R^n$. The scheme $S$ satisfies the contractivity~\eqref{eqn:contractivity_elementary} of the elementary scheme since it generates the two points in the next level in~\eqref{eqn:additional_round} on the geodesic between $p_j$ and $p_{j+1}$. This is, in fact, a special case of~\cite[Theorem 3.1]{dyn2017global}. Namely, if $d$ is the geodesic distance over $\Omega$, we have that $\delta(S^{\ell}(\PP))\le (\frac{1}{2})^{\ell} \delta(\PP)$, $\ell \in \N$. Then, using Lemma~\ref{lemma:dist} and the fact that the Euclidean distance always bounds from below the geodesic distance, we have 
   \begin{eqnarray*}
       \norm{S(\PP^{\ell})_{2j}-S_{\text{lin}}(\PP^{\ell})_{2j}} & = & \resizebox{0.7\hsize}{!}{$\norm{ A_{\omega}\left((\PP^{\ell})_j,A_\frac{1}{2}((\PP^{\ell})_j,(\PP^{\ell})_{j+1})\right)-  \left[(1-\omega)(\PP^{\ell})_j+ \omega \left(\frac{1}{2}(\PP^{\ell})_j+\frac{1}{2}(\PP^{\ell})_{j+1}\right) \right]}$} \\
       & \le & C (\omega+\omega^2) \norm{\frac{1}{2}((\PP^{\ell})_j-(\PP^{\ell})_{j+1})}^2 \\
       & \le & (2C) \,  \frac{1}{4} \,  d((\PP^{\ell})_{j},(\PP^{\ell})_{j+1})^2 \\
       & \le & (2C) \,  \frac{1}{4} \, \delta(\PP^{\ell})^2  \le (2C) \,  \frac{1}{4} \, \left( (\frac{1}{2})^{\ell}\delta(\PP^0)\right)^2 = \left(2C \delta(\PP^0)\right) \,  \big(\frac{1}{4}\big)^{\ell+1} =  E \eta^{\ell+1} .
   \end{eqnarray*}
Here $E = 2C \delta(\PP^0)$ and $\eta=\frac{1}{4}<\frac{1}{2}$. A similar argument applies to $\norm{S(\PP^{\ell})_{2j+1}-S_{\text{lin}}(\PP^{\ell})_{2j+1}}$, yielding a proximity of order $m=1$ between $S$ and $S_{\text{lin}}$. The other conditions of Theorem~\ref{thm:C1_m1} are met straightforwardly.  
\end{proof}

\rev{
\section{Non-stationary subdivision schemes} \label{sec:nonstationary}

We now extend part of the framework developed in the previous sections to non-stationary subdivision schemes and refer to the schemes in those sections as ``stationary.'' Throughout this section, we use the primal parameterization. Thus, after \(N\) binary refinement steps, the associated parameter sequence is \(T^N=2^{-N}\mathbb Z\).

We begin with a simple example. Let $\{A^{[j]}_\omega\}_{j\geq 0}$ be a sequence of binary averages on $\Omega$. For each $j \geq 0$, define a refinement operator $S_j : \Omega^{\mathbb{Z}} \to \Omega^{\mathbb{Z}}$ by
\[
S_j(\PP)_{2i} = \PP_i, \qquad 
S_j(\PP)_{2i+1} = A^{[j]}_{1/2}(\PP_i, \PP_{i+1}), \quad i \in \mathbb{Z}.
\]                                   
The corresponding parameterization is the primal parametrization, as in the case of the stationary elementary schemes. Each operator $S_j$ has the same local structure, differing only in the choice of the average. If the average is fixed, namely $A^{[j]}_\omega \equiv A_\omega$ for all $j \in \Z_+$, then all refinement operators coincide, and the resulting scheme is the stationary elementary scheme. However, if the averages vary with the refinement level, then the refinement rule changes from step to step, and the subdivision process is naturally described by a sequence of refinement operators. This leads to a non-stationary subdivision scheme, in the sense that the refinement applied at each level depends on the level itself.

\subsection{Definition and convergence of non-stationary subdivision schemes}

We begin with a new definition of the admissibility of a refinement operator.
\begin{definition}(Admissible refinement operator relative to refinement level \(L\))
\label{def:admissible}
Let \(S\) be a refinement operator defined on sequences in
\[
    \D\subset \{\PP\in\Omega^{\Z}:\delta(\PP)<\infty\}.
\]
Then, \(S\) is admissible relative to refinement level \(L\in\N\) if
\(S^m(\D)\subset\D\), for \(m=1,\ldots,L\), and if the block \(S^L\)
satisfies the following two estimates. First,
\begin{equation} \label{eqn:mu}
\mu^{(L)}
=
\sup_{\PP\in\D,\,\delta(\PP)>0}
\frac{\delta(S^L(\PP))}{\delta(\PP)}
<\infty .
\end{equation}
Second, there exists a constant \(C_S^{(L)}<\infty\) such that
\begin{equation} \label{eqn:block_DS}
\sup_{i\in\Z}
d\bigl((S^L(\PP))_{2^L i},\PP_i\bigr)
\le
C_S^{(L)}\delta(\PP),
\qquad \PP\in\D .
\end{equation}
We call \(\mu^{(L)}\) the refinement factor of \(S\) associated with refinement level \(L\), and \(C_S^{(L)}\) the displacement-safe constant
of the block \(S^L\).
\end{definition}

Note that if the refinement factor of the above definition is less than one, then $\mu^{(L)}$ is a contractivity factor with contractivity level $L$. Next, we present the definition of a non-stationary subdivision scheme.
\begin{definition}(Non-stationary subdivision scheme)
\label{def:nns_subdivision}
Let $\s= \{ S_j \}_{j=0}^\infty$ be a sequence of admissible refinement operators with respect to a corresponding sequence of refinement levels $ \LL = \{ L_j \}_{j=0}^\infty$. For each refinement operator \(S_j\), we denote by \(\D_j\) its domain and by \(C_{S_j}^{(L_j)}\) the displacement-safe constant of the block \(S_j^{L_j}\) in the sense of~\eqref{eqn:block_DS}. We further assume that
\[
    \D^\ast=\bigcap_{j\in\Z_+}\D_j
\]
is nonempty and invariant under all refinement blocks. Namely,
\[
    S_j^m(\D^\ast)\subset \D^\ast,
    \qquad m=1,\ldots,L_j,\quad j\in\Z_+ .
\] 
We denote the corresponding non-stationary refinement process by
\[
    \s^{\LL}=\{S_j^{L_j}\}_{j\in\Z_+}.
\]
Whenever the following limit exists, we write
\begin{equation*}
\s^{\LL} (\PP)=\lim_{k\rightarrow \infty}S_k^{L_k} S_{k-1}^{L_{k-1}}
\cdots S_1^{L_1}S_0^{L_0}(\PP),\quad \PP\in \D^\ast .
\end{equation*}
\end{definition}

Definition~\ref{def:nns_subdivision} highlights the inherent flexibility of non-stationary schemes, allowing for a wide range of choices in both sequences $\s$ and $ \LL$. Note that in the case that $L_j=1$, for all $j\in \Z_+$, we obtain a well-recognized version of the classical definition of a non-stationary scheme. We recall~\eqref{eqn:mu} and conclude that 
\begin{equation} \label{eqn:cont_non-stationary}
 \delta (S_j^{L_j} (\PP)) \le \mu_j^{(L_j)} \delta (\PP) .
\end{equation}
Then, repeatedly applying the refinement by the non-stationary subdivision scheme $\s^{\LL}=\{ S_j^{L_j} \}_{j=0}^\infty$, we obtain:
\begin{equation} \label{eqn:repeated_cont_non-stationary}
\delta (S^{L_{k}}_{k}S^{L_{k-1}}_{k-1}S^{L_{k-2}}_{k-2}\cdots S^{L_1}_{1} S^{L_0}_{0} (\PP)) \le \left(\Pi_{j=0}^k \, \mu_j^{(L_j)} \right)\delta (\PP)
\end{equation}
For a given set of refinement levels $\{ L_{j} \}_{j=0}^\infty$, we introduce the following notation for the finite product of refinement factors and finite composition of refinements:
\begin{equation} \label{eqn:Lk}
\pi^{(\LL)_{0}^k} = \Pi_{j=0}^k \, \mu_j^{(L_j)}  \qquad \text{and} \qquad
 \s^{(\LL)_{0}^k} =S^{L_{k}}_{k}S^{L_{k-1}}_{k-1}S^{L_{k-2}}_{k-2}\cdots S^{L_1}_{1} S^{L_0}_{0}  . 
\end{equation}
Also, denote by
\[
    Q^{(\LL)_0^k}
    =
    \left(T^{N_k},\s^{(\LL)_0^k}(\PP)\right)
\]
the refinement of \(\PP\) by the finite application of the non-stationary scheme \(\s^{\LL}\), together with its associated primal parameters.

The following theorem is analogous to the convergence theorem for stationary schemes. Here, $A_\omega$ is an average defined on $\Omega$. 
\begin{thm}[non-stationary convergence] 
\label{thm:convergence_non-stationary}
Let $\s^{\LL} = \{ S_j^{L_j}\}_{j\in\Z_+} $ be a non-stationary subdivision scheme. Assume the following holds:
\begin{enumerate}
    \item The refinement levels, the block displacement-safe constants, and the refinement factors are uniformly bounded. Namely, there exist
    \(L^\ast\in\N\) and \(C^\ast,\mu^\ast\in[0,\infty)\) such that
    \[
        L_j\le L^\ast,\qquad
        C_{S_j}^{(L_j)}\le C^\ast,\qquad
        \mu_j^{(L_j)}\le \mu^\ast,
        \qquad j\in\Z_+ .
    \]
    \item The sequence of products of refinement factors $\pi^{(\LL)_{0}^k}$ is summable, 
\begin{equation}
\sum_{k=0}^\infty \pi^{(\LL)_{0}^k}   = \sum_{k=0}^\infty \left(\Pi_{j=0}^k \, \mu_j^{(L_j)}\right) < \infty .
\end{equation} 
\end{enumerate}
Then, for every initial data \(\PP^0\in\D^\ast\), the sequence of piecewise average interpolants
\[
    \left\{
    \pa{A_w,Q^{(\LL)_0^k};t}
    \right\}_{k\in\Z_+}
\]
converges uniformly in \(t\in\R\) to a continuous limit
\(Q^\infty(t)\in\Omega\).
\end{thm} 
\begin{proof}
The proof follows the lines of the proof of Theorem~\ref{thm:convergence}.
We show that the sequence of \(\Omega\)-valued functions
\[
    \left\{\pa{A_w,Q^{(\LL)_0^k};t}\right\}_{k\in\Z_+}
\]
is Cauchy with respect to \(d_\infty\).

For brevity, set 
\[ \PP^{[k]}=\s^{(\LL)_0^k}(\PP)\quad \text {and} \quad 
    N_k=\sum_{\nu=0}^k L_\nu .
\] 
Then, \(Q^{(\LL)_0^k}=(T^{N_k},\PP^{[k]})\). By
\eqref{eqn:repeated_cont_non-stationary},
\begin{equation} \label{eqn:delta_Pk_ns}
    \delta(\PP^{[k]})
    \le
    \pi^{(\LL)_0^k}\delta(\PP).
\end{equation}

Now, we estimate the distance between two consecutive interpolants. Fix \(k\in\Z_+\) and \(t\in\R\). Let \(i\in\Z\) be such that $t\in[t_i^{N_k},t_{i+1}^{N_k})$. Since \(N_{k+1}=N_k+L_{k+1}\), this interval is subdivided at the next non-stationary block into \(2^{L_{k+1}}\) subintervals. Hence, there exists
\(r\in\{0,\ldots,2^{L_{k+1}}-1\}\) such that
\[
    t\in
    [t_{2^{L_{k+1}}i+r}^{N_{k+1}},
     t_{2^{L_{k+1}}i+r+1}^{N_{k+1}}).
\]
By the triangle inequality,
\begin{align}
\label{eqn:proof_main_ineq_ns}
d \left( \pa{A_w,Q^{(\LL)_0^{k+1}};t},
          \pa{A_w,Q^{(\LL)_0^k};t} \right)
&\le
d \left( \pa{A_w,Q^{(\LL)_0^{k+1}};t},
          \PP^{[k+1]}_{2^{L_{k+1}}i} \right) \nonumber\\
&\quad+
d\left( \PP^{[k+1]}_{2^{L_{k+1}}i},
         \PP^{[k]}_i \right) \nonumber\\
&\quad+
d\left( \PP^{[k]}_i,
         \pa{A_w,Q^{(\LL)_0^k};t} \right).
\end{align}

We now bound the three terms on the right-hand side. For the first term,
using the boundedness property of the average and then summing over
neighboring points in the refined sequence, we obtain
\begin{align}
d \left( \pa{A_w,Q^{(\LL)_0^{k+1}};t},
          \PP^{[k+1]}_{2^{L_{k+1}}i} \right)
&\le
d \left( \pa{A_w,Q^{(\LL)_0^{k+1}};t},
          \PP^{[k+1]}_{2^{L_{k+1}}i+r} \right)  \nonumber\\
&\quad+
d\left(\PP^{[k+1]}_{2^{L_{k+1}}i+r},
        \PP^{[k+1]}_{2^{L_{k+1}}i}\right) \nonumber\\
&\le
\delta(\PP^{[k+1]})+r\,\delta(\PP^{[k+1]}) \nonumber\\
&\le
2^{L_{k+1}}\delta(\PP^{[k+1]}).
\label{eqn:bound_term1_ns}
\end{align}
For the second term, we use the block displacement-safe estimate
\eqref{eqn:block_DS} for \(S_{k+1}^{L_{k+1}}\):
\begin{equation}\label{eqn:bound_term2_ns}
d\left( \PP^{[k+1]}_{2^{L_{k+1}}i}, \PP^{[k]}_i \right)
\le
C_{S_{k+1}}^{(L_{k+1})}\delta(\PP^{[k]})
\le
C^\ast\delta(\PP^{[k]}).
\end{equation}
For the third term, the boundedness property of the average gives
\begin{equation} \label{eqn:bound_term3_ns}
d\left( \PP^{[k]}_i,\pa{A_w,Q^{(\LL)_0^k};t} \right)
\le
\delta(\PP^{[k]}).
\end{equation}
Combining \eqref{eqn:proof_main_ineq_ns}--\eqref{eqn:bound_term3_ns},
and using \eqref{eqn:mu}, \eqref{eqn:delta_Pk_ns}, and the uniform
bounds in the assumptions, we get
\begin{align*}
d^k(t)
&:=
d \left( \pa{A_w,Q^{(\LL)_0^{k+1}};t},
          \pa{A_w,Q^{(\LL)_0^k};t} \right)  \\
&\le
2^{L_{k+1}}\delta(\PP^{[k+1]})
+
(C^\ast+1)\delta(\PP^{[k]}) \\
&\le
\left(2^{L_{k+1}}\mu_{k+1}^{(L_{k+1})}+C^\ast+1\right)
\delta(\PP^{[k]}) \\
&\le
\left(2^{L^\ast}\mu^\ast+C^\ast+1\right)
\pi^{(\LL)_0^k}\delta(\PP).
\end{align*}
Set
\[
    B=2^{L^\ast}\mu^\ast+C^\ast+1 .
\]
Then, for every \(m\in\N\),
\begin{align*}
d_\infty\left(
\pa{A_w,Q^{(\LL)_0^{k+m}};t},
\pa{A_w,Q^{(\LL)_0^k};t}
\right)
&\le
\sum_{\ell=k}^{k+m-1}
d_\infty\left(
\pa{A_w,Q^{(\LL)_0^{\ell+1}};\cdot},
\pa{A_w,Q^{(\LL)_0^\ell};\cdot}
\right) \\
&\le
B\,\delta(\PP)
\sum_{\ell=k}^{k+m-1}\pi^{(\LL)_0^\ell}.
\end{align*}
Since \(\sum_{\ell=0}^\infty\pi^{(\LL)_0^\ell}<\infty\), the tails of this
series tend to zero as \(k\to\infty\), uniformly in \(m\). Therefore
\[
    \left\{\pa{A_w,Q^{(\LL)_0^k};t}\right\}_{k\in\Z_+}
\]
is a Cauchy sequence with respect to \(d_\infty\).

Since \(X=(\Omega,d)\) is complete, this Cauchy sequence has a uniform
limit \(Q^\infty:\R\to\Omega\). Finally, each function
\(\pa{A_w,Q^{(\LL)_0^k};t}\) is continuous in \(t\), by the continuity of the average with respect to the averaging parameter. Hence, the uniform limit \(Q^\infty\) is continuous.
\end{proof}

\begin{remark}[On proximity in the non-stationary setting]
A natural question is whether the proximity arguments of Sections~\ref{sec:proximity1} and~\ref{sec:proximity2} can be extended to the non-stationary setting. A direct approach would compare each block \(S_j^{L_j}\) with a fixed reference block, or with a reference sequence of blocks, using estimates that are uniform in the level \(j\). Such estimates can serve as a tool for verifying the assumptions of Theorem~\ref{thm:convergence_non-stationary}. Namely, the uniform control of the block refinement factors \(\mu_j^{(L_j)}\), the block displacement constants \(C_{S_j}^{(L_j)}\), and the summability of the products \(\prod_{\nu=0}^k\mu_\nu^{(L_\nu)}\).

More general non-stationary proximity conditions, in which the proximity measure or reference schemes vary with the level, would require tracking their interaction with the products \(\prod_{\nu=0}^k \mu_\nu^{(L_\nu)}\), and, when the refinement levels are not uniformly bounded, also with the factors \(2^{L_k}\). Although such a formulation appears less transparent as a practical convergence tool than the direct refinement-factor criterion of Theorem~\ref{thm:convergence_non-stationary}, it may provide a useful framework for treating non-stationary schemes whose convergence is more naturally understood through comparison with simpler or better-studied schemes. In particular, such an approach may lead to refined convergence criteria and possibly to smoothness results for non-stationary schemes. We leave the development of this proximity-based non-stationary theory for future work.
\end{remark}
} 


\section*{Acknowledgment} 
We thank Hofit Ben Zion Vardi for her contribution to the conception of this project in its early stages. Nir Sharon is partially supported by the NSF-BSF award 2024791, the BSF award 2024266, and the DFG award 514588180. 


\bibliographystyle{plain} 
\bibliography{revised_bib}


\end{document}